
\documentclass{amsart}
\usepackage{amsmath}
\usepackage{amsfonts}
\usepackage{graphicx}

\setcounter{MaxMatrixCols}{10}

\newtheorem{theorem}{Theorem}
\theoremstyle{plain}

\newtheorem{corollary}{Corollary}

\newtheorem{definition}{Definition}

\newtheorem{lemma}{Lemma}

\newtheorem{proposition}{Proposition}
\newtheorem{remark}{Remark}

\numberwithin{equation}{section}
\hyphenation{re-pa-ra-me-tri-za-tion re-pa-ra-me-tri-za-tions}

\begin{document}
\title[hitting time, decay, arithmetics]{Long hitting time, slow decay of
correlations and arithmetical properties}
\author[S. Galatolo, P. Peterlongo]{S. Galatolo$^1$, P. Peterlongo$^2$}
\address{\textit{$^{1}$Dipartimento di Matematica Applicata, Via Buonarroti
1, Pisa} \\
\indent \textit{$^2$Scuola Normale Superiore, Piazza dei Cavalieri 7, Pisa.}}
\email{s.galatolo@docenti.ing.unipi.it}
\maketitle

\begin{abstract}
Let $\tau _r(x,x_0)$ be the time needed for a point $x$ to enter for the
first time in a ball $B_r(x_0)$ centered in $x_0$, with small radius $r$. We
construct a class of translations on the two torus having particular
arithmetic properties (Liouville components with intertwined denominators of
convergents) not satisfying a logarithm law, i.e. such that for typical $x,x_0$
\begin{equation*}
\liminf_{r\rightarrow 0} \frac{\log \tau _r(x,x_0)}{-\log r} = \infty.
\end{equation*}

By considering a suitable reparametrization of the flow generated by a
suspension of this translation, using a previous construction by Fayad, we
show the existence of a mixing system on three torus having the same
properties.
The speed of mixing of this example must be subpolynomial, because we also show that:
in a system having polynomial decay of correlations,
the $\limsup_{r\to 0}$ of the above ratio of logarithms
(which is also called the upper hitting time indicator) is bounded
from above by a function of the local dimension
and the speed of correlation decay.

More generally, this shows that reparametrizations of torus translations
having a Liouville component cannot be polynomially mixing.
\end{abstract}

\section{Introduction and statement of results}

Let $(X,T,\mu )$ be an ergodic system on a metric space $X$ and fix a point $%
x_0 \in X$. For $\mu$-almost every $x \in X$, the orbit of $x$ goes closer and closer to $x_{0}$ entering (sooner or later) in every positive measure neighborhood of the target point $x_{0}$.

For several applications it is useful to quantify the speed of approaching
of the orbit of $x$ to $x_{0}$. In the literature this has been done in several ways:

\begin{itemize}
\item Hitting time (also called waiting time). Let $B_{r}(x_{0})$ be a ball
with radius $r$ centered in $x_{0}$. We consider the time 
\begin{equation*}
\tau _{r}(x,x_{0})=\min \{n\in \mathbb{N}^{+}:T^{n}(x)\in B_{r}(x_{0})\}
\end{equation*}%
needed for the orbit of $x$ to enter in $B_{r}(x_{0})$ for the first time.
We consider the asymptotic\footnote{%
For two functions of the same variable we intend the asymptotic symbol $f(x)
\sim g(x)$ to mean $\lim \frac{\log f(x)}{\log g(x)}=1$.} behavior of $\tau
_{r}(x,x_{0})$ as $r$ decreases to $0$. Often this is a power law of the
type $\tau _{r}\sim r^{-d}$ and then it is interesting to extract the
exponent $d$ by looking at the behavior of $\frac{\log \tau _{r}(x,x_{0})}{%
-\log r}$ for small $r$. In many systems (having generic artithmetical
properties or fast decay of correlations, see e.g. \cite%
{A,galatolo,G2,kimseo,KiM}) this quantity converges to the local dimension $%
d_{\mu }(x_{0})$\ of $\mu $ at $x_{0}$. We remark that in these systems 
\begin{equation}
\tau _{r}\sim \mu (B_{r}(x_{0}))^{-1}\label{llaw1}.
\end{equation}

\item Logarithm (like) law. Let $d_{n}(x,x_{0})=\min_{1 \leq i\leq n}\mathrm{dist}%
(T^{i}(x),x_{0})$. We consider the asymptotic behavior of $d_{n}(x,x_{0}) $ as $n$ goes to $\infty $. In several examples of flows on suitable spaces (mostly constructed by algebraic means and having fast decay
of correlations), estimates on the behavior for $d_{n}$ (or distances between suitable projections of the points $x,x_0$) are given, in
particular when $x_{0}$ is the point at infinity (see e.g. \cite%
{AM,HV,KM,M,Mas,Su}). These problems are deeply connected with diophantine
approximation and several geometric questions.

\item Dynamical Borel-Cantelli Lemma (Shrinking targets). We consider a
family of balls $B_{i}=B_{r_{i}}(x_{0})$ with $i\in \mathbb{N}$ centered in $x_{0}$ and such that $r_{i}\rightarrow 0$ and we ask if $x\in
\limsup_{i}T^{-i}(B_{i})$ or equivalently $T^{i}(x)\in B_{i}$ for infinitely
many $i$ (other families of decreasing sets have also been considered similarly). Here in some class of systems with fast decay of correlations, or
generic arithmetical properties (see e.g. \cite{CK,Dol,fayad2,GK,kurz}) it is
possible to obtain results like 
\begin{equation}
\sum_{i=1}^{n}\mu (B_{i})=\infty \Rightarrow x\in \limsup_{i}T^{-i}(B_{i})
\label{BC}
\end{equation}
for a.e. $x\in X$.
\end{itemize}

Each of the above points has a large related bibliography which cannot be
cited exhaustively (see references in cited articles). The above points of
view are strictly related. It is easy to see (see Proposition \ref{prop:htll}) that
\begin{equation}
\underset{r\rightarrow 0}{\lim }\frac{\log \tau _{r}(x,x_{0})}{-\log r}%
=\left( \underset{n\rightarrow \infty }{\lim }\frac{-\log d_{n}(x,x_{0})}{%
\log n}\right) ^{-1}  \label{quellasop}
\end{equation}%
when limits exist. Hence the hitting time and the logarithm law approaches
are somewhat equivalent. A statement like equation (\ref{BC}) instead is somewhat slightly
stronger (see \cite{GK} or remark \ref{rem:approcci}).

Given the abundance of relations between the hitting time and measure of the
balls (and hence local dimension), it is worth to look for examples where
there is no such relation. Remark that, by what is said above, such a system
should not have fast decay of correlations or generic arithmetical properties in some sense. In \cite{kimseo} it is proved that if we consider a rotation $x\mapsto x+\alpha :\mathbb{S}^{1}\rightarrow \mathbb{S}^{1}$ and $\alpha $ is a Liouville irrational, then for each $x_{0}$ it holds 
\begin{equation*}
\limsup_{r\rightarrow 0}\frac{\log \tau _{r}(x,x_{0})}{-\log r}=\infty ,
\end{equation*}%
for almost every $x$, while on the other hand, for \emph{every} irrational $\alpha 
$, 
\begin{equation}
\liminf_{r\rightarrow 0}\frac{\log \tau _{r}(x,x_{0})}{-\log r}=1.
\label{kimseo}
\end{equation}%
\noindent Remark that $1$ is equal to the local dimension for every $x_{0}$, hence in this kind of systems a relation like Equation (\ref{llaw1}) does not hold for the limsup behavior but holds for the liminf one.
Trying to generalize this kind of results to the $d$ dimensional torus, it
is natural to ask if there are translations on the $d$-torus violating
the above equality (\ref{kimseo}) with $1$ replaced by $d$ (indeed
this problem  was posed in \cite{T}, section 5). By (\ref{quellasop}) (more precisely, by Proposition \ref{prop:htll}) such translations would also violate the logarithm law 
\begin{equation}
\limsup_{i\rightarrow \infty }\frac{\log d_{i}(x,x_{0})}{-\log i}=\frac{1}{d}%
.  \label{llaw}
\end{equation}
We remark that by Khintchin-Groshev Theorem (see e.g. \cite{T}, section 5
for a statement from our point of view) this equation must be satisfied for
almost every translation on the $d$-torus.

\bigskip 
In section \ref{hitorus} we construct translations on the torus
$\mathbb{T}^{2}$ where logarithm law (\ref{llaw}) does not hold; this
requires particular arithmetic properties. More precisely (see Theorem \ref{thm:infliminf}):

\bigskip \noindent \textbf{Theorem A.} \textit{Let $T_{(\alpha ,\alpha
^{\prime})}$ be the translation on the torus $\mathbb{T}^{2}$ by vector $%
(\alpha,\alpha^{\prime}) \in \mathbb{R}^2$. If $\alpha ,\alpha ^{\prime }$
are irrationals whose continued fraction expansions have denominators of
convergents $q_{n},q_{n}^{\prime }$ which, for some $\gamma >1$, satisfy
(eventually) 
\begin{equation*}
q_{n}^{\prime }\geq q_{n}^{\gamma }~;~q_{n+1}\geq q_{n}^{\prime }{}^{\gamma }
\end{equation*}
then for each $x_{0}$ 
\begin{equation*}
\liminf_{r\rightarrow 0}\frac{\log \tau _{r}(x,x_{0})}{-\log r}\geq \gamma
\end{equation*}
holds for almost each $x\in \mathbb{T}^{2}$.}\bigskip

This answers negatively the above mentioned problem of \cite{T}
(see also \textsc{question 3} in \cite{tseng2}).

In \cite{fayad2} Fayad provides an example of a mixing system which does not
have the monotone shrinking target property. In such system relation (\ref%
{BC}) is not satisfied for some decreasing sequence of balls. The example is
given by a reparametrization of a translation flow having arithmetical
properties which are included in the class considered above. Hitting time
indicators are well behaved under bounded reparametrizations and Poincar\'{e}
sections (see section \ref{fow}). By these properties, using Fayad's
construction and Theorem A, we can strenghten the result proved in \cite%
{fayad2}, by the following (see Corollary \ref{thm:fydconn})

\bigskip \noindent \textbf{Theorem B.} \textit{There exists a smooth, mixing system $%
(\mathbb{T}^{3},T,\mu) $ on the three dimensional torus, with absolutely
continuous invariant measure $\mu $ (and continuous positive density) such that for
every $x_{0}$} 
\begin{equation*}
\liminf_{r\rightarrow 0}\frac{\log \tau _{r}(x,x_{0})}{-\log r}=\infty
\end{equation*}%
\emph{holds for }$\mu $\emph{\ almost every }$x\in \mathbb{T}^{3}.$\bigskip

We remark that this is slightly stronger than what is proved in \cite{fayad2}
because, as said above, the shrinking target property implies logarithm law.
In this example the speed of correlation decay (see Definition \ref{sup})
must be lower than any power law.

This fact follows from the general result proved in the last section of the
paper: there is a relation between local dimension, decay of correlations
and hitting time. Several results proving a relation between hitting time
and ball's measure need some sort of rapid decay of correlations. Polynomial
decay is enough to prove an upper bound for the hitting time. We state this
in the following (for a more general result see Theorem \ref{1}):

\bigskip \noindent \textbf{Theorem C.} \textit{If a system on a manifold of
dimension $d$ has absolutely continuous invariant probability measure
with continuous and strictly positive density (the assumption on the measure can be largely
relaxed) and polynomial decay of correlations (on Lipschitz observables)
with exponent $p$, then for each $x_{0}$ 
\begin{equation*}
d \leq \underset{r\rightarrow 0}{\lim \sup }\frac{\log \tau _{r}(x,x_{0})}{%
-\log r}\leq d + \frac{2d+2}{p}
\end{equation*}%
holds for $\mu $-almost each $x$.}\bigskip

This theorem and invariance of hitting time under positive speed
reparametrizations (Proposition \ref{reparametriz}) give a more general
upper bound on the decay of correlations for reparametrizations of torus
translations, depending on its arithmetical properties (see Corollary \ref%
{cormanifold}):

\bigskip \noindent \textbf{Theorem D.} \textit{If $(\mathbb{T}^{d},T)$ is
the time-1 map of any reparametrization with positive speed of an irrational
translation flow with direction $\alpha =(\alpha _{1},...,\alpha _{d})$ and
some of the $\alpha _{i}$ has type $\gamma > d$ (see section \ref{hitorus}
for definiton of type) then the speed of decay of correlations of this
system is slower than a power law with exponent ${\frac{2d+2}{\gamma - d}}$.}%
\bigskip

In particular a reparametrization of a translation with an angle having
infinite type must have \emph{subpolynomial} decay of correlations. We
remark (see \cite{F2}) that there are polynomially mixing reparametrizations
whith finite type angles (and no strictly positive speed).

This paper is organized as follows: in section \ref{sec2} we give basic
definitions and state some general facts about hitting time, local dimension
and relations between them. In section \ref{newsec} we recall basic facts on
continued fractions and review known results about type and circle
rotations. In section \ref{hitorus} we prove Theorem A (Theorem \ref%
{thm:infliminf}). In section \ref{fow} we state some easy relations about
hitting time in flows and its behavior under reparametrizations, time one
map, and poincare sections, allowing to deduce our Theorem B from Theorem A
and Fayad's result\ ( see Corollary \ref{thm:fydconn}). In section \ref%
{lastsection} we prove a result on speed of decay of correlation and hitting
time (Theorem \ref{1}); this will imply Theorem C and Theorem D. Theorem A
and Theorem D together imply that Fayad's example has subpolynomial decay of
correlations.

In the Appendix we give some auxiliary proposition which are useful to
relate some of the different point of views on the subject mentioned at the
beginning of this introduction.

\bigskip \noindent \textbf{Acknowledgements.} We would like to thank Bassam
Fayad, for stimulating discussions and for pointing us some relevant papers.
Second author would like to thank SISSA/ISAS (Trieste) where a first part of
research work was carried; he would like also to thank his parents for
financial support during last part of the work.

\section{Hitting time and local dimension: basic facts\label{sec2}}

Let $(X,T)$ be a discrete time dynamical system where $X$ is a separable
metric space equipped with a Borel finite measure $\mu $ and $T:X\rightarrow
X$ is a measurable map.

\begin{definition}
The first entrance time of the orbit of $x$ in the ball $B(x_{0},r)$ with
center $x_{0} $ and radius $r$ is 
\begin{equation*}
\tau _{r}(x,x_{0})=\min \{n\in \mathbb{N^{+}}:T^{n}(x)\in B(x_{0},r)\}.
\end{equation*}
\end{definition}

We want to study the behaviour of $\tau _{r}(x,x_{0})$ as $r\rightarrow 0$.
In many interesting cases this is a power law $\tau _{r}(x,x_{0})\sim r^{-R}$.
In order to extract the exponent, we define

\begin{definition}
The upper and lower \emph{hitting time indicators} are 
\begin{equation}
\overline{R}(x,x_{0})=\limsup_{r\rightarrow 0}\frac{\log \tau _{r}(x,x_{0})}{%
-\log r},\quad \underline{R}(x,x_{0})=\liminf_{r\rightarrow 0}\frac{\log
\tau _{r}(x,x_{0})}{-\log r}.  \label{expo}
\end{equation}
\end{definition}

If for some $r,$ $\tau _{r}(x,x_{0})$ is not defined then $\overline{R}%
(x,x_{0})$ and $\underline{R}(x,x_{0})$ are set to be equal to infinity. We
remark that the indicators $\overline{R}(x)$ and $\underline{R}(x)$ of
quantitative recurrence defined in \cite{BS} are obtained as a special case, 
$\overline{R}(x)=\overline{R}(x,x)$, $\underline{R}(x)=\underline{R}(x,x)$.

We recall some basic properties of $R(x,x_{0})$ which follow from the
definition:

\begin{proposition}\label{inizz} $\overline{R}(x,x_{0})$ satisfies the following properties
(and the same is true with $\overline{R}$ replaced by $\underline{R}$):
\begin{itemize}
\item $x\notin T^{-1}(x_{0})$ implies $\overline{R}(x,x_{0})=\overline{R}(T(x),x_{0})$.

\item If $F$ is bilipschitz and%
\begin{equation}
\begin{array}{ccc}
X & \overset{F}{\longrightarrow } & Y \\ 
\downarrow _{T_{1}} &  & \downarrow _{T_{2}} \\ 
X & \overset{F}{\longrightarrow } & Y%
\end{array}%
\end{equation}%
commutes, then $\overline{R}(x,x_{0})=\overline{R}(F(x),F(x_{0}))$.

\item If $T$ is bilipschitz, and $x\notin T^{-1}(T(x_{0}))$ then $\overline{R}(x,x_{0})=\overline{R}(x,T(x_{0}))$.
\end{itemize}
\end{proposition}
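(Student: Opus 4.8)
The plan is to reduce all three assertions to two elementary facts about the ratio defining $\overline R$ and $\underline R$, and then to carry out the corresponding bookkeeping on the entrance times $\tau_r$. The first fact is that for any fixed constant $C\ge 1$ one has
\[
\limsup_{r\to 0}\frac{\log\tau_{Cr}(x,x_0)}{-\log r}=\limsup_{r\to 0}\frac{\log\tau_{r}(x,x_0)}{-\log r}=\limsup_{r\to 0}\frac{\log\tau_{r/C}(x,x_0)}{-\log r},
\]
and likewise with $\limsup$ replaced by $\liminf$, because substituting $s=Cr$ (resp. $s=r/C$) alters the denominator only by the factor $(-\log s)/(-\log s+\log C)\to 1$. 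The second fact is that $r\mapsto\tau_r(x,x_0)$ is non-increasing (smaller balls are harder to hit), so $\lim_{r\to 0}\tau_r(x,x_0)$ exists in $\mathbb{N}^{+}\cup\{\infty\}$; if it is finite then $\tau_r$ is eventually constant and all the indicators occurring in the statement are $0$, so there is nothing to prove, whereas if it is $+\infty$ then $\log\tau_r\to\infty$, so a bounded additive constant inside the logarithm is asymptotically negligible and may be dropped in $\limsup$ and $\liminf$. From now on I assume we are in this second case (and that none of the times involved is undefined, since in that degenerate situation the relevant indicators are $+\infty$ by convention and the same is immediately seen on the other side of each identity).

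For the first property, note that $\tau_r(Tx,x_0)=\min\{n\ge 1:T^{n+1}x\in B(x_0,r)\}$ while $\tau_r(x,x_0)=\min\{n\ge 1:T^{n}x\in B(x_0,r)\}$. The hypothesis $x\notin T^{-1}(x_0)$ says $Tx\ne x_0$, so for every $r<\mathrm{dist}(Tx,x_0)$ the point $Tx$ lies outside $B(x_0,r)$ and hence the first entrance of the orbit of $x$ cannot take place at time $1$; for such $r$ therefore $\tau_r(x,x_0)=\tau_r(Tx,x_0)+1$. Letting $r\to 0$ and invoking the second fact above gives $\overline R(x,x_0)=\overline R(Tx,x_0)$ and $\underline R(x,x_0)=\underline R(Tx,x_0)$.

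For the second property, let $L\ge 1$ be a bilipschitz constant for $F$, i.e. $L^{-1}\mathrm{dist}(a,b)\le\mathrm{dist}(F(a),F(b))\le L\,\mathrm{dist}(a,b)$. Iterating $F\circ T_1=T_2\circ F$ gives $F(T_1^{n}x)=T_2^{n}(Fx)$ for all $n$, hence $\mathrm{dist}(T_2^{n}(Fx),Fx_0)$ lies between $L^{-1}\mathrm{dist}(T_1^{n}x,x_0)$ and $L\,\mathrm{dist}(T_1^{n}x,x_0)$. Comparing the first times these quantities drop below $r$ yields
\[
\tau^{T_1}_{Lr}(x,x_0)\ \le\ \tau^{T_2}_{r}(Fx,Fx_0)\ \le\ \tau^{T_1}_{r/L}(x,x_0),
\]
and the first fact squeezes the $\limsup$ (resp. $\liminf$) of the corresponding ratios, giving $\overline R(x,x_0)=\overline R(Fx,Fx_0)$ and $\underline R(x,x_0)=\underline R(Fx,Fx_0)$. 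The third property then follows by combining two instances of what is already proved: applying the second property with $F=T$ and $T_1=T_2=T$ (a trivially commuting square, legitimate because $T$ is bilipschitz) gives $\overline R(x,x_0)=\overline R(Tx,Tx_0)$, while applying the first property to the target point $T(x_0)$ — whose hypothesis $x\notin T^{-1}(T(x_0))$ is exactly $Tx\ne Tx_0$ — gives $\overline R(x,Tx_0)=\overline R(Tx,Tx_0)$; chaining these yields $\overline R(x,x_0)=\overline R(x,Tx_0)$, and identically for $\underline R$.

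I do not expect any genuine obstacle here: the only points that need care are the degenerate configurations — an orbit that never meets a given ball, or a target lying on a backward orbit of $x_0$ — which is precisely why the exclusions $x\notin T^{-1}(x_0)$ and $x\notin T^{-1}(T(x_0))$ are imposed and why the monotonicity dichotomy is recorded at the outset; everything else is the routine asymptotic bookkeeping indicated above.
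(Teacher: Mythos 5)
The paper does not actually prove Proposition \ref{inizz}; it is stated as following directly from the definition. Your verification is correct and is exactly the intended routine argument: the shift identity $\tau_r(x,x_0)=\tau_r(Tx,x_0)+1$ for small $r$, the bilipschitz squeeze $\tau^{T_1}_{Lr}(x,x_0)\le\tau^{T_2}_{r}(Fx,Fx_0)\le\tau^{T_1}_{r/L}(x,x_0)$ together with the invariance of the indicators under rescaling $r\mapsto Cr$, and the derivation of the third item by composing the first two, with the degenerate cases (exact hits and never-hitting orbits) correctly disposed of.
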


By the above properties, if $T$ is an ergodic isometry on the $d$-torus then 
$\overline{R}(x,x_{0})=\overline{R},$ $\underline{R}(x,x_{0})= $ $\underline{%
R}$ are constant for almost every $x,x_{0}$.

The continous limit in the definition of hitting time indicator can be
reduced to a discrete limit:

\begin{lemma}
\label{lemmino} Let $r_{n}$ be a decreasing sequence of reals, such that $%
r_{n}\rightarrow 0$. Suppose that there is a constant $1>c>0$ satisfying $%
r_{n+1}>cr_{n}$ eventually as $n$ increases. If $\tau _{r}:\mathbb{R}%
\rightarrow \mathbb{R}$ is decreasing, then $\liminf_{n\rightarrow \infty }%
\frac{\log \tau _{r_{n}}}{-\log r_{n}}=\liminf_{r\rightarrow 0}\frac{\log
\tau _{r}}{-\log r}$ and $\limsup_{n\rightarrow \infty }\frac{\log \tau
_{r_{n}}}{-\log r_{n}}=\limsup_{r\rightarrow 0}\frac{\log \tau _{r}}{-\log r}
$.
\end{lemma}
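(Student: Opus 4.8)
The plan is a straightforward two–sided squeeze: for each small $r$ I compare the ratio $\frac{\log\tau_r}{-\log r}$ with the two ratios attached to the neighbouring terms of the sequence $(r_n)$, and then absorb the gap between consecutive $r_n$'s into a bounded additive error on the logarithmic scale.

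First I would dispose of the trivial case. Since $\tau$ is decreasing, $\tau_r$ is monotone in $r$ and has a limit as $r\to 0^{+}$; if this limit is finite then $\log\tau_r$ stays bounded while $-\log r\to\infty$, so both sides of both claimed equalities vanish. So assume $\tau_{r_n}\to\infty$. For $r$ small enough (in particular smaller than the $r_{n_0}$ past which $r_{n+1}>cr_n$ holds) let $n=n(r)$ be the unique index with $r_{n+1}\le r<r_n$; then $n(r)\to\infty$ as $r\to 0$. Monotonicity of $\tau$ and of $-\log$ gives $\log\tau_{r_n}\le\log\tau_r\le\log\tau_{r_{n+1}}$ and $-\log r_n\le -\log r\le -\log r_{n+1}$, with all six quantities eventually positive, whence
\begin{equation*}
\frac{\log\tau_{r_n}}{-\log r_{n+1}}\le\frac{\log\tau_r}{-\log r}\le\frac{\log\tau_{r_{n+1}}}{-\log r_n}.
\end{equation*}

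Next I would feed in the hypothesis $r_{n+1}>cr_n$ in the form $-\log r_{n+1}<-\log r_n+C$ with $C:=\log(1/c)>0$, which turns the previous line into
\begin{equation*}
\frac{\log\tau_{r_n}}{-\log r_n}\cdot\frac{-\log r_n}{-\log r_n+C}\le\frac{\log\tau_r}{-\log r}\le\frac{\log\tau_{r_{n+1}}}{-\log r_{n+1}}\cdot\frac{-\log r_{n+1}}{-\log r_{n+1}-C}.
\end{equation*}
Since $-\log r_n\to\infty$, the two correction factors tend to $1$; passing to $\liminf$ and to $\limsup$ as $r\to 0$ (so that $n(r)\to\infty$) yields $\liminf_{r\to 0}\frac{\log\tau_r}{-\log r}\ge\liminf_n\frac{\log\tau_{r_n}}{-\log r_n}$ and $\limsup_{r\to 0}\frac{\log\tau_r}{-\log r}\le\limsup_n\frac{\log\tau_{r_n}}{-\log r_n}$, where a harmless shift of index by $1$ is used on the upper bound. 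The opposite inequalities are automatic, because $(r_n)$ is itself a sequence tending to $0$, so the $\liminf$ (resp.\ $\limsup$) along it is at least (resp.\ at most) the $\liminf$ (resp.\ $\limsup$) over all $r\to 0$. Combining the two halves gives the two stated equalities. There is no genuine obstacle here; the only point requiring a little care is the bookkeeping that keeps the denominators $-\log r_n$ and $-\log r_{n+1}-C$ and the numerators $\log\tau_{r_n}$ positive for all large $n$ — which is exactly what the reductions $r\to 0$ and $\tau_{r_n}\to\infty$ provide.
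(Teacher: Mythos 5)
Your argument is correct and is essentially the paper's own proof: you bracket each small $r$ between consecutive terms $r_{n+1}\le r<r_n$, use monotonicity of $\tau$ together with $r_{n+1}>cr_n$ to convert the multiplicative gap into a bounded additive error $\log(1/c)$ in the denominator, and then pass to the $\liminf$/$\limsup$, with the reverse inequalities holding trivially since $(r_n)$ is a sequence tending to $0$. Your explicit handling of the degenerate case ($\tau_r$ bounded) and of the positivity of numerators and denominators is a minor but welcome refinement over the paper's terser version of the same squeeze.
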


\begin{proof}
If $r_{n}\geq r\geq r_{n+1}\geq cr_{n}$ then $\tau _{r_{n+1}}\geq \tau
_{r}\geq \tau _{r_{n}},$ moreover $\log r_{n}\geq \log r\geq \log
r_{n+1}\geq \log cr_{n}\geq \log cr...$ hence for $n$ big enough%
\begin{equation*}
\frac{\log \tau _{r_{n+1}}}{-\log r_{n+1}}\geq \frac{\log \tau _{r}}{-\log
r-\log c}\geq \frac{\log \tau _{r_{n}}}{-\log r_{n}-2\log c},
\end{equation*}%
which gives the statement by taking the limits.
\end{proof}

We now recall some definitions about local dimension. If $X$ is a metric
space and $\mu $ is a measure on $X$ the local dimension of $\mu $ at $x_0$ is
defined as follows

\begin{definition}
The upper and lower \emph{local dimensions} at $x_0$ are 
\begin{equation*}
\overline{d}_{\mu }(x_0)=\limsup_{r\rightarrow 0}\frac{\log \mu (B_{r}(x_0))}{%
\log r},\quad \underline{d}_{\mu }(x_0)=\liminf_{r\rightarrow 0}\frac{\log \mu
(B_{r}(x_0))}{\log r}
\end{equation*}
\end{definition}

If the limit exists we denote its value as $d_{\mu }(x_0)=\lim_{r\rightarrow 0}%
\frac{\log \mu (B_{r}(x_0))}{\log r}$. In this case $\mu (B_{r}(x_0))\sim
r^{d_{\mu }(x_0)}$. If $\overline{d}_{\mu }(x_0)=\underline{d}_{\mu }(x_0)=d$
almost everywhere the system is called exact dimensional. In this case many
notions of dimension of a measure coincide. In particular $d$ is equal
to the infimum Hausdorff dimension of full measure sets: $d=\inf \{\dim _{H}Z:\mu (Z)=1\}$. This
happens in a large class of systems, for example in systems having nonzero
Lyapunov exponents almost everywhere (see the book \cite{P},chapter 2).

In general measure preserving systems it is rather easy to prove that
behavior of the hitting time is related to the invariant measure of the
system.

\begin{proposition}(\cite{G})
\label{GAN} If $(X,T,\mu )$ is a dynamical system over a separable
metric space, with an invariant measure $\mu $ (not necessarily finite),
then for each $x_{0}$%
\begin{equation}
\underline{R}(x,x_{0})\geq \underline{d}_{\mu }(x_{0})\ ,\ \overline{R}%
(x,x_{0})\geq \overline{d}_{\mu }(x_{0})  \label{llrels}
\end{equation}%
holds for $\mu $-almost every $x$.
\end{proposition}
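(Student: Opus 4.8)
The plan is to run a first Borel--Cantelli argument based on the single observation that, by $T$-invariance of $\mu$, the orbit of a point cannot enter a small ball too quickly unless it starts in a correspondingly small set. Concretely, since $\{x:\tau_r(x,x_0)\le N\}=\bigcup_{n=1}^{N}T^{-n}(B_r(x_0))$, invariance gives $\mu(\{x:\tau_r(x,x_0)\le N\})\le N\,\mu(B_r(x_0))$. I may assume $\mu(B_r(x_0))>0$ for every $r>0$: otherwise $\bigcup_nT^{-n}(B_{r_0}(x_0))$ is $\mu$-null for some $r_0$, so $\tau_{r_0}(\cdot,x_0)$ is undefined $\mu$-a.e.\ and both indicators are $+\infty$, making the inequalities trivial. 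I also only need the convergence half of Borel--Cantelli ($\sum_k\mu(A_k)<\infty\Rightarrow\mu(\limsup_kA_k)=0$), which holds for an arbitrary, not necessarily finite, measure.

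For the liminf inequality, fix $\varepsilon>0$ and set $d=\underline d_\mu(x_0)$. By definition of lower local dimension, $\mu(B_r(x_0))\le r^{\,d-\varepsilon/2}$ for all sufficiently small $r$. I would choose a geometric sequence $r_k=\theta^{k}$ with $\theta\in(0,1)$ and let $A_k=\{x:\tau_{r_k}(x,x_0)\le r_k^{-(d-\varepsilon)}\}$. The measure estimate above gives, for $k$ large, $\mu(A_k)\le r_k^{-(d-\varepsilon)}\mu(B_{r_k}(x_0))\le r_k^{\,\varepsilon/2}=\theta^{k\varepsilon/2}$, so $\sum_k\mu(A_k)<\infty$. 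By Borel--Cantelli, for $\mu$-a.e.\ $x$ one has $x\notin A_k$ for all large $k$, i.e.\ $\tau_{r_k}(x,x_0)>r_k^{-(d-\varepsilon)}$, hence $\liminf_k \frac{\log\tau_{r_k}(x,x_0)}{-\log r_k}\ge d-\varepsilon$. Since $r_{k+1}=\theta r_k$ and $r\mapsto\tau_r$ is nonincreasing, Lemma \ref{lemmino} upgrades this to $\underline R(x,x_0)\ge d-\varepsilon$; intersecting over $\varepsilon=1/m$, $m\in\mathbb N$, yields $\underline R(x,x_0)\ge\underline d_\mu(x_0)$ for $\mu$-a.e.\ $x$.

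For the limsup inequality, put $D=\overline d_\mu(x_0)$. By definition of upper local dimension there is a sequence $s_j\to0$ with $\mu(B_{s_j}(x_0))\le s_j^{\,D-\varepsilon/2}$ for all large $j$; passing to a subsequence (still written $s_j$) I can arrange in addition that $\sum_j s_j^{\,\varepsilon/4}<\infty$. With $A_j=\{x:\tau_{s_j}(x,x_0)\le s_j^{-(D-\varepsilon)}\}$ the same computation gives $\mu(A_j)\le s_j^{\,\varepsilon/2}$, which is summable, so by Borel--Cantelli for $\mu$-a.e.\ $x$ one has $\tau_{s_j}(x,x_0)>s_j^{-(D-\varepsilon)}$ for all large $j$. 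This exhibits arbitrarily small radii $r=s_j$ with $\frac{\log\tau_r(x,x_0)}{-\log r}>D-\varepsilon$, so $\overline R(x,x_0)\ge D-\varepsilon$; intersecting over $\varepsilon=1/m$ gives $\overline R(x,x_0)\ge\overline d_\mu(x_0)$ a.e.

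The main obstacle is the limsup half: one cannot use a fixed geometric sequence together with Lemma \ref{lemmino}, because the scales realizing $\overline d_\mu(x_0)$ need not be geometrically distributed. The fix is to work along a subsequence of a scale sequence along which $\mu(B_r(x_0))$ (approximately) attains the upper local dimension, thinned so as to make the Borel--Cantelli series converge; one must also take care that the thinning is allowed (the scales tend to $0$) and that, since $\mu$ is possibly infinite, only the measure-independent direction of Borel--Cantelli is invoked.
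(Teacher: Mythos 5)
Your proof is correct and is essentially the standard argument for this proposition: the paper itself gives no proof, deferring to \cite{G}, where the same Borel--Cantelli scheme based on the invariance bound $\mu(\{\tau_r\le N\})\le N\mu(B_r(x_0))$ is used (geometric radii plus Lemma \ref{lemmino} for the liminf, a thinned subsequence of near-optimal scales for the limsup). The only cosmetic point is the case $\underline d_\mu(x_0)=\infty$ or $\overline d_\mu(x_0)=\infty$, where one runs the identical argument with an arbitrary finite threshold $M$ in place of $d-\varepsilon$.
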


\begin{remark}
\label{rem:tausumu}  Relations of this type can appear (see e.g.
statement of Theorem \ref{1}) in the form of a direct logarithm law between
waiting time and measure; for example 
\begin{equation*}
\liminf_{r\rightarrow 0}\frac{\log \tau _{r}(x,x_{0})}{-\log \mu
(B_{r}(x_{0}))}\geq 1
\end{equation*}%
implies (but it is slightly more precise of) first inequality in (\ref%
{llrels}) while analogously 
\begin{equation*}
\limsup_{r\rightarrow 0}\frac{\log \tau _{r}(x,x_{0})}{-\log \mu
(B_{r}(x_{0}))}\leq 1
\end{equation*}%
implies the upper bound for hitting time: $\overline{R}(x,x_{0})\leq 
\overline{d}_{\mu }(x_{0})$. Implications become equivalences when
exact-dimensional measure is assumed.
\end{remark}

In systems with decay of correlations (see Defintion \ref{sup}) faster than
any power law the above inequalities become equalities (for precise
statementes, see \cite{galatolo} or Theorem \ref{1}).

\section{Continued fractions, type and circle rotations\label{newsec}}

We briefly recall the basic definitions and properties of continued
fractions ( for a general reference see e.g. \cite{K}) that will be needed
in the sequel. Let $\alpha $ be an irrational number, and denote by $%
[a_{0};a_{1},a_{2},\ldots ]$ its continued fraction expansion: 
\begin{equation*}
\alpha =a_{0}+\cfrac{1}{a_1 + \cfrac{1}{a_2 + \ldots}}=:[a_{0};a_{1},a_{2},%
\ldots ].
\end{equation*}%
The integers $a_{0},a_{1},a_{2}\ldots $ are called partial quotients of $%
\alpha $ and are all positive except for $a_{0}$. As usual, we define
inductively the sequences $p_{n}$ and $q_{n}$ by: 
\begin{eqnarray*}
p_{-1}=1, &p_{0}=0,&p_{k+1}=a_{k+1}p_{k}+p_{k-1}\text{ for }k\geq 0; \\
q_{-1}=0, &q_{0}=1,&q_{k+1}=a_{k+1}q_{k}+q_{k-1}\text{ for }k\geq 0.
\end{eqnarray*}%
The fractions $p_{n}/q_{n}$ are called the \emph{convergents} of $\alpha $,
as they do in fact converge to it. Moreover they can be seen as \emph{best
approximations} of $\alpha $ in the following sense. Denote by $\Vert x\Vert
:=\min_{n\in \mathbb{Z}}|x-n|$ the distance of a real number form the
integers. Then $q=q_{n}$ for some $n$ if and only if 
\begin{equation*}
\Vert q\alpha \Vert <\Vert q^{\prime }\alpha \Vert \text{ for every positive 
}q^{\prime }<q
\end{equation*}%
and $p_{n}$ is the integer such that $\Vert q_{n}\alpha \Vert =|q_{n}\alpha
-p_{n}|$.

\begin{proposition}
\label{prop:fibo} For any irrational number $\alpha $ we have that the
sequence of its convergents $q_{n}$ eventually grows faster than any fixed
power of $n$.
\end{proposition}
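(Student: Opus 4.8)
The plan is to show that the denominators $q_n$ grow at least exponentially in $n$, which trivially beats every fixed power $n^k$. The only inputs needed are the recursion $q_{n+1}=a_{n+1}q_n+q_{n-1}$ recalled above and the fact that every partial quotient of positive index satisfies $a_k\geq 1$.

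First I would check that the sequence $(q_n)$ is nondecreasing: indeed $q_0=1$, $q_1=a_1\geq 1=q_0$, and for $n\geq 1$ the recursion gives $q_{n+1}=a_{n+1}q_n+q_{n-1}\geq q_n$. Feeding this monotonicity back into the recursion one more step yields, for every $n\geq 0$,
\begin{equation*}
q_{n+2}=a_{n+2}q_{n+1}+q_n\geq q_{n+1}+q_n\geq 2q_n,
\end{equation*}
so the denominators at least double every two steps. By an immediate induction this gives $q_{2m}\geq 2^m$ and $q_{2m+1}\geq 2^m$, that is $q_n\geq 2^{\lfloor n/2\rfloor}$ for all $n$. (Equivalently one observes $q_n\geq F_n$, the $n$-th Fibonacci number, and uses $F_n\sim \varphi^n/\sqrt5$ with $\varphi$ the golden ratio; either formulation suffices.)

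Finally, for any fixed exponent $k>0$ we have $q_n/n^k\geq 2^{\lfloor n/2\rfloor}/n^k\to\infty$ as $n\to\infty$, since an exponential dominates any polynomial. This is precisely the statement that $q_n$ eventually exceeds $n^k$ for every $k$, which proves Proposition \ref{prop:fibo}.

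There is no genuine difficulty here: the assertion is elementary and the entire argument is a one-line lower bound on the recursion. The only point deserving a moment of care is the index bookkeeping in the induction $q_{n+2}\geq 2q_n$ together with the base cases $q_0=1$ and $q_1\geq 1$; once these are in place the conclusion is automatic.
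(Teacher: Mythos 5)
Your argument is correct and is essentially the paper's proof: the paper also notes $q_{n+1}\geq q_n+q_{n-1}$, hence $q_n\geq f_n$ (Fibonacci), and concludes by exponential-beats-polynomial. You merely spell out the doubling bound $q_{n+2}\geq 2q_n$ explicitly, which is a fine (and equivalent) way to make the Fibonacci lower bound quantitative.
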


\begin{proof}
$q_{n+1}\geq q_{n}+q_{n-1}$ so $q_{n}\geq f_{n}$ where $f_{n}$ is Fibonacci
sequence.
\end{proof}

\begin{proposition}
(\cite{K}, Thm. 9 and Thm. 13) \label{prop:bounds} 
\begin{equation*}
\frac{1}{2}\frac{1}{q_{n+1}} < \frac{1}{q_n + q_{n+1}} < \Vert q_n \alpha
\Vert < \frac{1}{q_{n+1}}
\end{equation*}
\end{proposition}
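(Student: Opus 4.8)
The plan is to reduce everything to the exact formula for $\Vert q_n\alpha\Vert$ in terms of the \emph{complete quotients} $\alpha^{\ast}_{k}:=[a_k;a_{k+1},a_{k+2},\ldots]$. Since $\Vert q_n\alpha\Vert$ depends only on $\alpha$ modulo $1$, I may assume $a_0=0$ (consistent with the excerpt's normalization $p_{-1}=1$, $p_0=0$). First I would prove, by a one-line induction on the defining recursions, the determinant identity
\[
p_nq_{n-1}-p_{n-1}q_n=(-1)^{n-1}.
\]
Next, again by induction (unfolding the continued fraction one step at a time, using $\alpha^{\ast}_{n+1}=a_{n+1}+1/\alpha^{\ast}_{n+2}$), I would establish the representation
\[
\alpha=\frac{p_n\,\alpha^{\ast}_{n+1}+p_{n-1}}{q_n\,\alpha^{\ast}_{n+1}+q_{n-1}}.
\]
Substituting this into $q_n\alpha-p_n$, clearing denominators, and invoking the determinant identity collapses the numerator to $(-1)^n$, so that
\[
q_n\alpha-p_n=\frac{(-1)^{n}}{q_n\,\alpha^{\ast}_{n+1}+q_{n-1}},\qquad\text{hence}\qquad\Vert q_n\alpha\Vert=|q_n\alpha-p_n|=\frac{1}{q_n\,\alpha^{\ast}_{n+1}+q_{n-1}},
\]
the first equality because $p_n$ is exactly the integer realizing the distance $\Vert q_n\alpha\Vert$, as recalled just above the statement.

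It then remains to bound the denominator $D_n:=q_n\,\alpha^{\ast}_{n+1}+q_{n-1}$. From $\alpha^{\ast}_{n+1}=a_{n+1}+1/\alpha^{\ast}_{n+2}$ and $\alpha^{\ast}_{n+2}>1$ we get $a_{n+1}<\alpha^{\ast}_{n+1}<a_{n+1}+1$; multiplying by $q_n>0$, adding $q_{n-1}$, and using $q_{n+1}=a_{n+1}q_n+q_{n-1}$ gives
\[
q_{n+1}<D_n<q_{n+1}+q_n,
\]
which is precisely the middle pair of inequalities $\frac{1}{q_n+q_{n+1}}<\Vert q_n\alpha\Vert<\frac{1}{q_{n+1}}$. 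For the leftmost inequality, the recursion yields $q_{n+1}=a_{n+1}q_n+q_{n-1}\ge q_n+q_{n-1}>q_n$ for $n\ge 1$, so $q_n+q_{n+1}<2q_{n+1}$ and therefore $\frac{1}{2}\frac{1}{q_{n+1}}<\frac{1}{q_n+q_{n+1}}$, closing the chain.

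I do not expect a genuine obstacle here: this is a classical fact (Khinchin, Thms.\ 9 and 13, as cited), and every step is an elementary induction or inequality. The only points requiring care are the index conventions of the excerpt ($p_{-1}=1$, $p_0=0$, $q_{-1}=0$, $q_0=1$), which pin down both the sign in the determinant identity and the implicit $a_0=0$ behind the phrase ``the convergents $p_n/q_n$ converge to $\alpha$'', and the observation that strict monotonicity $q_n<q_{n+1}$ — used only to produce the factor $2$ on the left — holds from $n\ge 1$ onward, which is all the sequel needs.
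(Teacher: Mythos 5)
The paper gives no proof of this proposition; it is quoted directly from Khinchin (Thms.\ 9 and 13). Your argument is the standard textbook derivation of exactly those theorems --- the determinant identity, the complete-quotient representation $\alpha=(p_n\alpha^{\ast}_{n+1}+p_{n-1})/(q_n\alpha^{\ast}_{n+1}+q_{n-1})$, and the bound $a_{n+1}<\alpha^{\ast}_{n+1}<a_{n+1}+1$ --- and it is correct, including your remark that the strict factor-of-$2$ inequality (and the identification $\Vert q_n\alpha\Vert=|q_n\alpha-p_n|$) is only needed, and only guaranteed, for $n\geq 1$, which is all the paper ever uses.
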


To measure how well an irrational number is approximated by rational numbers
one introduces the notion of type.

\begin{definition}
\label{def:type} The \emph{type} (or Diophantine exponent\footnote{%
Type plus one is also called irrationality exponent, see for example \cite%
{sondow}.}) of an irrational number $\alpha$ is defined as 
\begin{equation*}
\gamma (\alpha ):=\sup \{\beta :\liminf_{q\rightarrow \infty }q^{\beta
}\Vert q\alpha \Vert =0\}
\end{equation*}
\end{definition}

In terms of the $q_n$s, it is easy to show that this is equal to 
\begin{equation*}
\limsup_{n\rightarrow \infty }\frac{\log q_{n+1}}{\log q_{n}}
\end{equation*}
(a proof can be given using Propositions \ref{prop:didienne}, \ref%
{prop:bosheq} and \ref{prop:bounds}).

Every irrational number has type $\geq 1$. The set of number of type $1$
(also known as Roth type) is of full measure; the set of numbers of type $%
\gamma $ has Hausdorff dimension $\frac{2}{\gamma +1}$. There exist numbers
of infinite type, called \emph{Liouville} numbers; their set is dense and
uncountable and has zero Hausdorff dimension.

We consider a rotation of the circle as a translation on $\mathbb{S}^1 \cong 
\mathbb{R}/\mathbb{Z}$ by a number $\alpha \in \mathbb{R}$ and we denote it
by $T_\alpha : x \mapsto x + \alpha \pmod 1$. For an irrational rotation the
following relations are known between hitting time indicator of $T_\alpha$
and the type of the rotation number $\gamma(\alpha)$.

\begin{theorem}
(\cite{kimseo}) For a fixed point $x_0$ and for Lebesgue-almost every $x$ 
\begin{equation*}
\overline{R}(x,x_0)=\gamma, \qquad \underline{R}(x,x_0)=1.
\end{equation*}
\end{theorem}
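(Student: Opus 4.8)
The plan is to establish the two equalities separately, reducing everything to known facts about continued fraction approximation. For an irrational rotation $T_\alpha$ and a fixed target $x_0$, the first entrance time into $B_r(x_0)$ is governed by how fast the orbit $\{x + n\alpha\}$ equidistributes near $x_0$, which in turn is controlled by the best rational approximations $p_n/q_n$. The key observation is that $\tau_r(x,x_0)$ is essentially independent of $x$ and $x_0$ (up to bounded multiplicative factors that do not affect the logarithmic ratios), so by Proposition \ref{inizz} the indicators are a.e. constant, and we may compute with a convenient $x_0$, e.g. $x_0 = x$, reducing to the quantitative recurrence indicators $\overline{R}(x)$, $\underline{R}(x)$.

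First I would prove $\overline{R}(x,x_0) \le \gamma$ and $\underline{R}(x,x_0) \le 1$ via upper bounds on $\tau_r$. The crucial combinatorial fact is: if $q_n$ is the largest convergent denominator with $\|q_n\alpha\| \ge 2r$ roughly, then within the first $O(q_{n+1})$ iterates the orbit visits every point to within distance $\|q_n\alpha\| \approx 1/q_{n+1}$; more precisely, the three-distance theorem tells us that the points $\{x, x+\alpha, \dots, x + (q_{n+1}-1)\alpha\}$ partition the circle into intervals of length at most $\|q_n\alpha\| + \|q_{n+1}\alpha\| < 2/q_{n+1}$, hence any ball of radius $r$ with $r \gtrsim 1/q_{n+1}$ is hit within $q_{n+1}$ steps. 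Choosing $n = n(r)$ minimal with $1/q_{n+1} \lesssim r$, i.e. $q_{n+1} \gtrsim 1/r$, gives $\tau_r \lesssim q_{n+1}$. Taking $r_n = 1/q_{n+1}$ (a sequence to which Lemma \ref{lemmino} applies, since $q_{n+2} \le $ something times $q_{n+1}^{\gamma+\epsilon}$ fails in general — so instead one uses the continuous $r$ directly): for the limsup, along the subsequence $r \approx 1/q_{n+1}$ one gets $\frac{\log \tau_r}{-\log r} \lesssim \frac{\log q_{n+1}}{\log q_{n+1}} = 1$, but the limsup over all $r$ picks up the worst case where $r$ is just above $\|q_n\alpha\| \approx 1/q_{n+1}$ and $\tau_r$ jumps up only to $q_{n+1}$; actually the relevant bad scale is $r$ slightly less than $1/q_n$, where $\tau_r$ can be as large as $q_{n+1}$ while $-\log r \approx \log q_n$, yielding the ratio $\frac{\log q_{n+1}}{\log q_n}$, whose limsup is exactly $\gamma(\alpha)$ by the characterization recalled in the excerpt. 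For the liminf, one checks that for $r \approx 1/q_{n+1}$ the time $\tau_r$ is at least of order $q_n$ (it takes that long even to return close to the start, by Proposition \ref{prop:bounds}: no $q' < q_n$ has $\|q'\alpha\| < \|q_{n-1}\alpha\|$, and for $r$ small one needs a genuine good approximation), so $\frac{\log \tau_r}{-\log r} \gtrsim \frac{\log q_n}{\log q_{n+1}}$, and $\liminf \frac{\log q_n}{\log q_{n+1}} \le 1$, but combined with the reverse inequality one pins it to $1$.

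The matching lower bounds are the content of Proposition \ref{GAN}: since Lebesgue measure is the invariant measure and has local dimension $1$ everywhere on $\mathbb{S}^1$, we get $\underline{R}(x,x_0) \ge 1$ and $\overline{R}(x,x_0) \ge 1$ automatically. For the limsup we need the strictly stronger $\overline{R}(x,x_0) \ge \gamma$, which requires showing $\tau_r$ is genuinely large at the bad scales. Here the argument is: for $r$ slightly smaller than $\|q_{n-1}\alpha\| \approx 1/q_n$, no iterate $T^j_\alpha(x)$ with $j < q_n$ can lie in $B_r(x_0)$ unless $x_0$ happens to be near $T^j_\alpha(x)$ for small $j$; but for a.e. $x$ (or using constancy, for the recurrence version $x_0 = x$) one can show, using the best-approximation property, that the orbit does not come within $1/q_n$ of $x_0$ before time comparable to $q_n$ — indeed before roughly $q_n$, and since we may pick $r$ down to about $1/q_{n+1}$ while keeping $\tau_r$ below $q_{n+1}$ but needing a good approximant, the ratio $\frac{\log q_{n+1}}{\log q_n}$ is realized along the subsequence of $n$ achieving the limsup in the type formula. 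I expect the main obstacle to be precisely this step: making rigorous, for \emph{almost every} $x$ rather than for the recurrence case, the claim that at the worst scales $\tau_r(x,x_0)$ is as large as $q_{n+1}$ (not merely $q_n$) — this is where one must carefully combine the three-distance structure with a Borel–Cantelli or measure-theoretic argument to handle the dependence on $x_0$, and where one invokes (or reproves) the precise statement from \cite{kimseo}. Once the scale-by-scale estimates $\log \tau_r \asymp \log q_{n+1}$ at the limsup-scales and $\log \tau_r \asymp \log q_n$ at the liminf-scales are in hand, taking $\limsup$ and $\liminf$ and comparing with $\gamma(\alpha) = \limsup \frac{\log q_{n+1}}{\log q_n}$ finishes the proof.
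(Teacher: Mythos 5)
First, note that the paper does not prove this theorem: it is quoted from \cite{kimseo}, so there is no internal proof to compare against; your attempt must stand on its own. It contains one outright error and one unclosed gap. The error is the opening reduction: you claim $\tau_r(x,x_0)$ is ``essentially independent of $x$ and $x_0$'' and propose to compute with $x_0=x$, i.e.\ to reduce to the quantitative recurrence indicators. This is false, and the paper displays the counterexample immediately below the statement: by \cite{choeseo} (equation (\ref{eq:recind})) one has $\overline{R}(x,x)=1$ and $\underline{R}(x,x)=1/\gamma$ for \emph{every} $x$, whereas the theorem asserts $\overline{R}(x,x_0)=\gamma$ and $\underline{R}(x,x_0)=1$ for a.e.\ $x$. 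Proposition \ref{inizz} only shows that $\overline{R}(x,x_0)$ depends on $x-x_0$ alone and takes a constant value for a.e.\ value of $x-x_0$; the diagonal $x=x_0$ is a null set on which the value is genuinely different, because once the target ball contains the initial point the orbit re-enters it already at time $q_k$ (homogeneous approximation), while a generic target at scale $r$ slightly below $1/q_k$ can be missed until a time comparable to $q_{k+1}$ (inhomogeneous approximation). The contrast between these two regimes is precisely the content of the theorem, so the proposed reduction destroys the statement being proved.

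The second problem is that the only hard inequality, $\overline{R}(x,x_0)\ge\gamma$ for a.e.\ $x$, is left unproved: you correctly identify it as the main obstacle but then propose to ``invoke (or reprove) the precise statement from \cite{kimseo}'', which is circular. What is actually needed is the quantitative measure estimate $\mu\{x:\tau_r^\alpha(x)\le K\}\le 2rq_n+K\Vert q_n\alpha\Vert$ for $\Vert q_n\alpha\Vert<2r\le\Vert q_{n-1}\alpha\Vert$ (this is Proposition \ref{prop:enough} of the paper, packaged as Lemma \ref{lem:keyl}), applied with $r$ a small multiple of $1/q_n$ and $K$ a small multiple of $q_{n+1}$ along a subsequence of $n$ realizing $\limsup_n\log q_{n+1}/\log q_n=\gamma$, followed by Borel--Cantelli; note that the naive union bound $\mu\{\tau_r\le K\}\le 2rK$ is useless here since $K\sim r^{-\beta}$ with $\beta>1$. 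Your remaining steps are sound: the three-distance bound gives $\tau_r\le q_{n+1}$ once $r\ge 1/q_{n+1}$, hence $\overline{R}\le\gamma$ and $\underline{R}\le 1$, and Proposition \ref{GAN} supplies $\underline{R}\ge 1$.
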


This result was preceded by an analogous one involving recurrence time.

\begin{theorem}
(\cite{choeseo}) For every $x$ 
\begin{equation}
\underline{R}(x,x)=\frac{1}{\gamma },\qquad \overline{R}(x,x)=1.
\label{eq:recind}
\end{equation}
\end{theorem}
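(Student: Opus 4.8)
The plan is to reduce everything to the arithmetic of the convergent denominators of $\alpha$. First observe that on the circle $\mathrm{dist}(T_\alpha^{n}(x),x)=\Vert n\alpha\Vert$ does not depend on $x$, so
\begin{equation*}
\tau_r(x,x)=\tau_r:=\min\{n\in\mathbb{N}^{+}:\Vert n\alpha\Vert<r\}
\end{equation*}
for every $x$. By the best approximation property recalled above one has $\min_{1\le q<q_k}\Vert q\alpha\Vert=\Vert q_{k-1}\alpha\Vert$, hence $\tau_r=q_k$ exactly when $\Vert q_k\alpha\Vert<r\le\Vert q_{k-1}\alpha\Vert$. Consequently, setting $g(r)=\frac{\log\tau_r}{-\log r}$, on each interval $I_k=(\Vert q_k\alpha\Vert,\Vert q_{k-1}\alpha\Vert]$ the function $g$ equals $r\mapsto\frac{\log q_k}{-\log r}$, which is continuous and (for $k$ large) strictly increasing; since the $I_k$ tile a right-neighbourhood of $0$,
\begin{equation*}
\overline R(x,x)=\limsup_{k\to\infty}\sup_{I_k}g=\limsup_{k\to\infty}\frac{\log q_k}{-\log\Vert q_{k-1}\alpha\Vert},\qquad \underline R(x,x)=\liminf_{k\to\infty}\inf_{I_k}g=\liminf_{k\to\infty}\frac{\log q_k}{-\log\Vert q_k\alpha\Vert}.
\end{equation*}

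Next I would insert the elementary two-sided estimates of Proposition \ref{prop:bounds}, namely $\log q_{n+1}<-\log\Vert q_n\alpha\Vert<\log(2q_{n+1})$. Since $q_k\to\infty$ by Proposition \ref{prop:fibo}, the additive constant $\log 2$ is asymptotically negligible in these ratios: from $\frac{\log q_k}{\log(2q_k)}<\frac{\log q_k}{-\log\Vert q_{k-1}\alpha\Vert}<1$ one gets $\overline R(x,x)=1$, and from $\frac{\log q_k}{\log(2q_{k+1})}<\frac{\log q_k}{-\log\Vert q_k\alpha\Vert}<\frac{\log q_k}{\log q_{k+1}}$ together with $\log(2q_{k+1})/\log q_{k+1}\to1$ one gets
\begin{equation*}
\underline R(x,x)=\liminf_{k\to\infty}\frac{\log q_k}{\log q_{k+1}}=\Bigl(\limsup_{k\to\infty}\frac{\log q_{k+1}}{\log q_k}\Bigr)^{-1}=\frac1\gamma,
\end{equation*}
using the characterization of the type in terms of the $q_n$ recalled after Definition \ref{def:type} (with the convention $1/\infty=0$ when $\alpha$ is Liouville).

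There is no serious obstacle here; the only points requiring care are that $\tau_r$ is a step function, so that $\limsup_{r\to0}$ and $\liminf_{r\to0}$ of $g$ are obtained by taking $\limsup$, respectively $\liminf$, over $k$ of the supremum, respectively infimum, of $g$ on $I_k$ (the monotonicity of $g$ on each $I_k$ makes these just the endpoint values), and that Proposition \ref{prop:bounds} costs only a bounded multiplicative factor, which is absorbed against $\log q_k\to\infty$. Everything else is a direct computation.
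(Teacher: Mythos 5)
Your argument is correct and complete: the reduction to $\tau_r=q_k$ on the interval $(\Vert q_k\alpha\Vert,\Vert q_{k-1}\alpha\Vert]$ via the best-approximation property, followed by the two-sided bounds of Proposition \ref{prop:bounds} and the characterization of the type as $\limsup_n \frac{\log q_{n+1}}{\log q_n}$, is exactly the standard route (the one in Choe--Seo). The paper itself states this theorem only as a citation of \cite{choeseo} and gives no proof, so there is nothing to compare beyond noting that your derivation is the expected one and is sound.
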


Notice that for each translation $\underline{R}(x,x_{0})$ is almost
everywhere equal to the dimension of the circle. It is natural to ask if
this generalizes to translations on $n$-tori. In next section we will show
that this does not hold even for $n=2$.

\section{Long hitting time for torus translations\label{hitorus}}

We denote by $T_{(\alpha ,\alpha^{\prime })}$ the translation by vector
$(\alpha,\alpha^{\prime}) \in \mathbb{R}^2$ on the torus
$\mathbb{T}^{2} \cong \mathbb{R}^{2}/\mathbb{Z}^{2}$;
we write $\tau_{r}^{(\alpha ,\alpha ^{\prime})}(x)$ for the hitting time needed
for a point $x \in \mathbb{T}^{2}$ to reach the ball of radius $r$ centered in
a fixed point $x_0$, where the metric is
given by sup distance.
This transformation is the direct product of two
rotations of the circle ($T_{\alpha },T_{\alpha ^{\prime }}$) with hitting
time functions $\tau _{r}^{\alpha }(x),\tau _{r}^{\alpha ^{\prime}}(x)$ as above.
The notation for continued fractions is easily guessed (e.g. $q_{n}^{\prime
} $ are denominators of convergents of $\alpha ^{\prime }$) and $\gamma
,\gamma ^{\prime }$ are respectively the types of $\alpha $ and $\alpha
^{\prime }$. In this section $x_0$ is assumed to be $0$, for circle case, or $(0,0)$,
for the torus case. This can be done without loss of generality regarding statements about
hitting time indicator (see Proposition \ref{inizz}) .

As in the case of one dimensional rotations, the hitting time behavior of $%
T_{(\alpha ,\alpha ^{\prime })}$ will depend on the arithmetical properties
of $(\alpha ,\alpha ^{\prime })$. The following is an easy (and not sharp at
all) estimation for the limsup indicator

\begin{proposition}
\label{UNO} If $T_{(\alpha ,\alpha ^{\prime })}$ is a translation on the two
torus as above and $\gamma ,\gamma ^{\prime }$ are respectively the type of $%
\alpha $ and $\alpha ^{\prime }$, then for each $x_{0}$ 
\begin{equation*}
\overline{R}(x,x_{0})\geq \max (\gamma ,\gamma ^{\prime }).
\end{equation*}
\end{proposition}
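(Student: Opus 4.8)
The plan is to reduce the two-torus problem to the one-dimensional rotation result. Since the metric on $\mathbb{T}^2$ is the sup distance, the ball $B_r((0,0))$ is a square $B_r^\alpha(0)\times B_r^{\alpha'}(0)$, where $B_r^\alpha(0)$ denotes the arc of radius $r$ in $\mathbb{S}^1$. Writing $x=(x_1,x_2)$, a point $x$ enters $B_r((0,0))$ under $T_{(\alpha,\alpha')}$ at time $n$ exactly when $T_\alpha^n(x_1)\in B_r^\alpha(0)$ \emph{and} $T_{\alpha'}^n(x_2)\in B_r^{\alpha'}(0)$. In particular the first such time is at least the first time the $x_1$-coordinate alone enters $B_r^\alpha(0)$, i.e.
\begin{equation*}
\tau_r^{(\alpha,\alpha')}(x)\geq \tau_r^\alpha(x_1),
\end{equation*}
and symmetrically $\tau_r^{(\alpha,\alpha')}(x)\geq \tau_r^{\alpha'}(x_2)$. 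Hence $\tau_r^{(\alpha,\alpha')}(x)\geq \max(\tau_r^\alpha(x_1),\tau_r^{\alpha'}(x_2))$ for every $r$.

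Next I would take logarithms, divide by $-\log r$, and pass to the $\limsup$ as $r\to 0$. Since $\limsup$ of a max equals the max of the $\limsup$s, this gives
\begin{equation*}
\overline{R}((x_1,x_2),(0,0))\;\geq\;\max\bigl(\overline{R}^\alpha(x_1,0),\,\overline{R}^{\alpha'}(x_2,0)\bigr),
\end{equation*}
where $\overline{R}^\alpha$, $\overline{R}^{\alpha'}$ are the upper hitting time indicators of the two circle rotations. By the theorem of Kim–Seo quoted just above, for each fixed target ($0$ here) and Lebesgue-almost every $x_1$ we have $\overline{R}^\alpha(x_1,0)=\gamma$, and likewise $\overline{R}^{\alpha'}(x_2,0)=\gamma'$ for almost every $x_2$. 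Since Lebesgue measure on $\mathbb{T}^2$ is the product of the two circle measures, the set of $(x_1,x_2)$ for which both equalities hold has full measure (Fubini), so $\overline{R}(x,(0,0))\geq\max(\gamma,\gamma')$ almost everywhere. Finally, the reduction to $x_0=(0,0)$ is harmless by Proposition \ref{inizz} (the indicators are a.e.-constant in the target under the isometry group of the torus), which gives the statement for arbitrary $x_0$.

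The argument is essentially immediate once the product structure of the balls in the sup metric is noted; there is no real obstacle, only the bookkeeping of making sure the null sets combine correctly under Fubini and that the $\limsup$-of-max identity is applied to the right quantities. The only point deserving a word of care is that the inequality $\tau_r^{(\alpha,\alpha')}(x)\geq\tau_r^\alpha(x_1)$ holds \emph{for all} $r$ simultaneously before taking limits, so no issue of interchanging limits arises; and that $\tau_r^\alpha(x_1)$ is finite for a.e.\ $x_1$ (minimality of irrational rotation), so the logarithm is well defined. This is why the proposition is, as the authors say, "not sharp at all": it only uses that one coordinate must be hit, ignoring the much stronger constraint that \emph{both} coordinates must be hit at the same time, which is precisely the phenomenon exploited in Theorem A.
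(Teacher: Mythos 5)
Your proof is correct and follows exactly the paper's argument: the sup metric gives $\tau_r^{(\alpha,\alpha')}(x)\geq\max(\tau_r^\alpha(x_1),\tau_r^{\alpha'}(x_2))$, and the conclusion then follows from the Kim--Seo theorem for circle rotations. You have merely spelled out the details (the $\limsup$ passage, Fubini for the null sets, the reduction to $x_0=(0,0)$) that the paper compresses into ``the statement follows straightforwardly.''
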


\begin{proof}
Since the distance on the torus is the $\sup $ distance then $%
T_{(\alpha,\alpha^{\prime})}^{n}(x)$ is near $x_{0}$ only if both
coordinates are, so $\tau _{r}^{(\alpha ,\alpha ^{\prime })}(x,x_{0})\geq
\max (\tau _{r}^{(\alpha )}(x,x_{0}),\tau _{r}^{(\alpha ^{\prime
})}(x,x_{0}))$ and the statement follows straigthforwardly by this.
\end{proof}

The above proposition can be trivially generalized to the $n$ torus. This
proposition implies that if one of the angles has infinite type then the $%
\limsup$ indicator of the whole translation is infinite.

The key to obtain non trivial lower bound for $\liminf$ indicator is to
consider irrationals with intertwined denominators of convergents. Take $%
\gamma >1$ and let $Y_{\gamma }\subset \mathbf{R}^{2}$ be the class of
couples of irrationals $(\alpha ,\alpha ^{\prime })$ given by the following
conditions on their convergents to be satisfied eventually: 
\begin{equation*}
q_{n}^{\prime }\geq q_{n}^{\gamma };
\end{equation*}%
\begin{equation*}
q_{n+1}\geq q_{n}^{\prime }{}^{\gamma }.
\end{equation*}%
We note that each $Y_{\gamma }$ is uncountable and dense in $[0,1]\times
\lbrack 0,1]$ and each irrational of the couple is of type at least $\gamma
^{2}$. The set $Y_\infty= \bigcap_\gamma Y_{\gamma}$ is also uncountable and
dense in unit square and both coordinates of the couple are Liouville
numbers (cfr. with construction appearing in Fayad's example, see Theorem %
\ref{fayadex}).

Our main result in this section is the following:

\begin{theorem}
\label{thm:infliminf} If $T_{(\alpha ,\alpha ^{\prime })}$ is a translation
of the two torus by a vector $(\alpha ,\alpha ^{\prime })\in Y_{\gamma }$
and $x_{0}\in \mathbb{T}^{2}$, then for Lebesgue-almost every $x\in \mathbb{T%
}^{2}$ 
\begin{equation*}
\underline{R}(x,x_{0}) \geq \gamma .
\end{equation*}
In particular, for $(\alpha,\alpha^{\prime}) \in Y_\infty$ almost everywhere 
$\underline{R}(x,x_{0})=\infty$.
\end{theorem}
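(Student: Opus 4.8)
The plan is to estimate from below the hitting time $\tau_r^{(\alpha,\alpha')}(x)$ along a suitably chosen sequence of radii $r_n$, exploiting the intertwined structure of the denominators $q_n, q_n', q_{n+1}$. First I would fix the scale: take $r_n = \Vert q_n\alpha\Vert$, which by Proposition \ref{prop:bounds} is comparable to $1/q_{n+1}$, so that $-\log r_n \sim \log q_{n+1}$. The point of this choice is that at scale $r_n$ the first coordinate $T_\alpha$ is still ``efficient'' (it returns close to $0$ after roughly $q_n$ steps), but we will see that the second coordinate forces a much longer wait. Since the metric on the torus is the sup metric, $\tau_r^{(\alpha,\alpha')}(x,x_0) \geq \tau_r^{\alpha'}(y,y_0)$ where $y, y_0$ are the second coordinates, so it suffices to get a lower bound on the one-dimensional hitting time of the rotation $T_{\alpha'}$ at radius $r_n$.

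The key one-dimensional fact I would use (or reprove in a line from continued fractions) is that for an irrational rotation, if $r$ is of order $1/q'_{k+1}$ then the hitting time $\tau_r^{\alpha'}(y,y_0)$ is, for almost every $y$, at least of order $q'_k$ — more precisely, the orbit of length $N$ of $T_{\alpha'}$ is $\Vert q'_k\alpha'\Vert$-dense but no denser until $N$ reaches the next denominator, so to get within $r \approx 1/q'_{k+1}$ of a generic target one must wait $\gtrsim q'_k$ steps (the measure of points that do better is small, of order $N r$, and Borel--Cantelli along the sequence $r_n$ kills them). Now I must feed in the arithmetic of $Y_\gamma$: given $r_n \approx 1/q_{n+1}$, I choose $k$ so that $q'_{k+1}$ is the denominator of $\alpha'$ straddling $q_{n+1}$; by the chain $q_n^\gamma \leq q_n' $ and $(q_n')^\gamma \leq q_{n+1} \leq \cdots$ the relevant $q'_k$ is bounded below by a fixed power of $q_{n+1}$. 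Concretely, since $q_{n+1} \leq (q_n')^? $ is not what we have — rather $q_{n+1}\ge (q_n')^\gamma$ — the denominator $q'_k$ just below $q_{n+1}$ satisfies $q'_k \geq (q_{n+1})^{1/\gamma'}$ is too weak; instead one uses that between $q_n'$ and $q_{n+1}$ the sequence $q'_j$ passes through values, and the last one $\le q_{n+1}$ is $\ge q_n' \ge q_n^\gamma$, while $q_{n+1}$ itself could be as large as we like. The clean estimate: $\tau_{r_n}^{\alpha'} \gtrsim q'_k$ with $q'_k \ge q_n^{\gamma}$ roughly, hence
\begin{equation*}
\frac{\log \tau_{r_n}^{(\alpha,\alpha')}}{-\log r_n} \gtrsim \frac{\log q_n^{\gamma}}{\log q_{n+1}} \geq \frac{\gamma \log q_n'}{\gamma \log q_n'} \cdot(\text{correction}),
\end{equation*}
and optimizing the bookkeeping so that numerator and denominator are each governed by consecutive terms of the interleaved chain should yield the ratio $\to \gamma$ in the liminf.

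The main obstacle, and where I would spend most of the care, is matching the scales so that the ratio is exactly $\gamma$ and not some smaller power: I need to choose the subsequence of radii $r_n$ so that $-\log r_n$ is comparable to $\log$ of one denominator in the chain while the forced hitting time is comparable to $\log$ of the \emph{previous} denominator, and the ratio of those two consecutive logs is pinched between $\gamma$ and something tending to $\gamma$ (using both $q_{n}' \geq q_n^\gamma$ and $q_{n+1}\geq (q_n')^\gamma$, possibly I want $r_n \approx 1/q_n'$ rather than $1/q_{n+1}$, so that $-\log r_n \sim \log q_n' \leq \gamma^{-1}\log q_{n+1}$ while $\tau_{r_n} \gtrsim q_n \geq (q_n')^{1/\gamma}$). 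Once the right scale is identified the rest is: (i) a Borel--Cantelli / measure estimate showing that for a.e.\ $x$ the second coordinate does not hit $B_{r_n}$ faster than the generic rate for all large $n$; (ii) the reduction to the second coordinate via the sup metric; (iii) passing from the subsequence $r_n$ to all $r\to 0$ via Lemma \ref{lemmino}, which requires checking $r_{n+1} > c\, r_n$ eventually — this may fail for the naive choice and force an interpolation with intermediate radii, but since $\tau_r$ is monotone and the indicator is a liminf, inserting the ``missing'' scales only helps. The final clause, $\underline R = \infty$ on $Y_\infty = \bigcap_\gamma Y_\gamma$, is then immediate: a point of $Y_\infty$ lies in $Y_\gamma$ for every $\gamma$, so $\underline R(x,x_0) \geq \gamma$ for all $\gamma$, and a countable intersection of full-measure sets (one for each integer $\gamma$) is full measure.
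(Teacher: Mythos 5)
There is a genuine gap, and it sits exactly where you deferred the work. You reduce the two-dimensional bound to the \emph{second} coordinate alone ($\tau_r^{(\alpha,\alpha')}\geq\tau_r^{\alpha'}$) and then only control a sparse subsequence of radii $r_n\approx 1/q_{n+1}$. Neither half can be repaired without changing the structure of the argument. First, no single coordinate suffices for all scales: for $2r$ slightly below $\Vert q'_{n-1}\alpha'\Vert\approx 1/q'_n$, Proposition \ref{prop:enough} gives $\mu\{y:\tau_r^{\alpha'}(y)\leq q'_n\}\approx 2r\,q'_n\approx 1$, so at these radii almost \emph{every} point hits within time $\approx(2r)^{-1}$ in the second coordinate and you get no exponent better than $1$ from it. At precisely those scales one must switch to the first coordinate (which works there because $q_{n+1}\geq (q'_n)^{\gamma}$ puts $r$ in the good range $[\Vert q_n\alpha\Vert^{1/\beta},\Vert q_{n-1}\alpha\Vert]$ for $T_\alpha$). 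The alternation between the two coordinates over complementary, overlapping ranges of radii is the whole point of the intertwining hypothesis; the paper's proof is organized around two families of index sets $I_n$, $I'_n$ whose union covers a neighbourhood of infinity, with the $A_i$ bounded via $\mu(A_i)$ on $I_n$ and via $\mu(A'_i)$ on $I'_n$. Second, your claim that passing from the subsequence to all $r$ ``only helps'' because $\tau_r$ is monotone and the indicator is a liminf is false: monotonicity gives $\tau_r\geq\tau_{r_n}\geq r_n^{-\beta}$ for $r<r_n$, but you need $\tau_r\geq r^{-\beta}$, and since $r_{n+1}/r_n\approx q_{n+1}/q_{n+2}\to 0$ super-geometrically, $r^{-\beta}$ can exceed $r_n^{-\beta}$ by an arbitrarily large factor in the gap. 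Lemma \ref{lemmino} requires $r_{n+1}>c\,r_n$ and is inapplicable here; a liminf is emphatically \emph{not} controlled by a sparse subsequence.

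Two smaller points. Your arithmetic in the middle paragraph has the inequalities reversed: from $q'_n\geq q_n^{\gamma}$ one gets $q_n\leq (q'_n)^{1/\gamma}$, not $q_n\geq (q'_n)^{1/\gamma}$, so the alternative scaling you float ($r_n\approx 1/q'_n$ with $\tau_{r_n}\gtrsim q_n$) yields exponent $1/\gamma$, not $\gamma$; the displayed ``estimate'' ending in ``$\cdot(\text{correction})$'' is not a computation. On the other hand, your one-dimensional heuristic (measure of fast-hitting points is of order $Nr$ plus a tail) is essentially the content of the paper's Proposition \ref{prop:enough} and Lemma \ref{lem:keyl}, and your treatment of the $Y_\infty$ clause (countable intersection over integer $\gamma$ of full-measure sets) is correct. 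To complete the proof you would need to (i) fix a geometric sequence $r_i=e^{-i}$ so that Lemma \ref{lemmino} applies, (ii) partition the indices $i$ according to whether $2r_i$ lies in $[M_n\Vert q_n\alpha\Vert^{1/\beta},\tfrac1{N_n}\Vert q_{n-1}\alpha\Vert]$ or in the analogous interval for $\alpha'$, verify using both hypotheses of $Y_\gamma$ that these intervals are nonempty and cover a right neighbourhood of $0$, and (iii) choose $M_n,N_n\to\infty$ so that the resulting bounds $1/N+1/M^{\beta}$ sum over each block to a summable sequence in $n$.
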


We will prove this long hitting time behaviour reducing to the
one-dimensional case: Lemma \ref{lem:keyl} locates in one dimension the
radii where hitting time is not so long, i.~e.~`moments' when orbit is near
the target; alternating character of the convergents of $\alpha$ and $%
\alpha^{\prime}$ implies that when one coordinate is near the target the
other is far from it and let us deduce long hitting time behaviour for the
torus.

For circle rotations the following proposition gives the relation between
measure of points with fixed hitting time and convergents of the continued
fraction, and will be used to prove our key Lemma. We omit the easy proof
(also follows from Proposition 6 of \cite{kimseo}).

\begin{proposition}
\label{prop:enough} Given $r$ such that $\Vert q_n \alpha \Vert < 2r \leq
\Vert q_{n-1} \alpha \Vert$ we have 
\begin{equation*}
\begin{array}{ll}
\mu \lbrace x: \tau_r^\alpha(x)=k \rbrace = 2r & \quad \mathit{for} \quad k \leq
q_n; \\ 
\mu \lbrace x: \tau_r^\alpha(x)=k \rbrace \leq \Vert q_n \alpha \Vert & \quad 
\mathit{for} \quad k > q_n.%
\end{array}%
\end{equation*}
\end{proposition}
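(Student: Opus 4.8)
The plan is to rewrite the level set of the hitting time as a difference of preimages of the target arc and then read the two estimates off from elementary facts about arcs on the circle. Write $I := B_r(0)$, an arc of $\mathbb{S}^1$ of length $\mu(I)=2r$, and note that $T_\alpha^{-j}(I) = I - j\alpha$ is an arc of the same length centred at $-j\alpha$. Since $\tau_r^\alpha(x)=k$ means $T_\alpha^k x \in I$ but $T_\alpha^j x \notin I$ for $1\le j\le k-1$,
\[
\{x : \tau_r^\alpha(x)=k\} \;=\; T_\alpha^{-k}(I)\setminus\bigcup_{j=1}^{k-1}T_\alpha^{-j}(I),
\]
and two of these arcs $T_\alpha^{-k}(I)$, $T_\alpha^{-j}(I)$ with $j<k$ intersect in a set of positive measure if and only if $\Vert(k-j)\alpha\Vert<2r$; as $k-j$ ranges over $1,\dots,k-1$, this is controlled entirely by $\Vert m\alpha\Vert$ for $1\le m\le k-1$.

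For $k\le q_n$ I would use the best-approximation property recalled above, which gives $\Vert m\alpha\Vert\ge\Vert q_{n-1}\alpha\Vert$ for every $1\le m< q_n$ (the denominators $q_0,\dots,q_{n-1}$ are exactly the $q<q_n$ at which $\Vert q\alpha\Vert$ reaches a new minimum, and $\Vert q_{n-1}\alpha\Vert$ is the smallest of these). With the hypothesis $2r\le\Vert q_{n-1}\alpha\Vert$ this yields $\Vert m\alpha\Vert\ge 2r$ for all $1\le m\le k-1$, so $T_\alpha^{-k}(I)$ is disjoint, up to a null set, from every arc subtracted from it, and therefore $\mu\{x:\tau_r^\alpha(x)=k\}=\mu(T_\alpha^{-k}(I))=2r$.

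For $k>q_n$ the index $m:=k-q_n$ lies in $\{1,\dots,k-1\}$, so $T_\alpha^{-(k-q_n)}(I)$ is one of the arcs being removed and
\[
\{x:\tau_r^\alpha(x)=k\} \;\subseteq\; T_\alpha^{-k}(I)\setminus T_\alpha^{-(k-q_n)}(I) \;=\; J\setminus(J+q_n\alpha),
\]
with $J:=T_\alpha^{-k}(I)$ of length $2r$. Translating an arc of length $2r$ by an amount of size $\Vert t\Vert$ removes from it a subarc of measure $\min(2r,\Vert t\Vert)$; taking $t=q_n\alpha$ and using $\Vert q_n\alpha\Vert<2r$ gives $\mu\{x:\tau_r^\alpha(x)=k\}\le\Vert q_n\alpha\Vert$.

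I do not expect a genuine obstacle: this is exactly the elementary computation the paper chooses to omit. The only points deserving a line of care are deducing $\min_{1\le m<q_n}\Vert m\alpha\Vert=\Vert q_{n-1}\alpha\Vert$ cleanly from the best-approximation property, and the bookkeeping of arc endpoints, which is immaterial since only Lebesgue measures appear in the statement.
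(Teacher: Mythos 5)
Your argument is correct, and it is essentially the standard computation the paper deliberately omits (the authors only remark that the claim ``also follows from Proposition 6 of \cite{kimseo}''): decompose $\{\tau_r^\alpha=k\}$ as $T_\alpha^{-k}(I)\setminus\bigcup_{j<k}T_\alpha^{-j}(I)$, use $\min_{1\le m<q_n}\Vert m\alpha\Vert=\Vert q_{n-1}\alpha\Vert\ge 2r$ to get disjointness for $k\le q_n$, and subtract the single arc $T_\alpha^{-(k-q_n)}(I)$ for $k>q_n$. Both steps check out, including the two points you flag (the best-approximation minimum and the measure of $J\setminus(J+q_n\alpha)$, which is fine since $2r+\Vert q_n\alpha\Vert<1$ for small $r$), so there is nothing to fix.
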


\begin{lemma}
\label{lem:keyl} 
Let $\beta, M, N \geq 1$. Taken $r$ such that for some $n$ we have that 
\begin{equation*}
\Vert q_n \alpha \Vert < M \Vert q_n \alpha \Vert^{1/\beta} \leq 2r \leq 
\frac{1}{N} \Vert q_{n-1} \alpha \Vert < \Vert q_{n-1} \alpha \Vert
\end{equation*}
then 
\begin{equation*}
\mu \lbrace x : \tau_r^\alpha(x) < (2r)^{-\beta} \rbrace \leq
\frac{1}{N} + \frac{1}{M^\beta}.
\end{equation*}
\end{lemma}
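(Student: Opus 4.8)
The strategy is to split the set $\{x:\tau_r^\alpha(x)<(2r)^{-\beta}\}$ according to the value $k=\tau_r^\alpha(x)$ and apply Proposition~\ref{prop:enough}, which measures the fibers $\{x:\tau_r^\alpha(x)=k\}$. First I would check that the hypothesis on $r$ puts $r$ in the range $\Vert q_n\alpha\Vert<2r\le\Vert q_{n-1}\alpha\Vert$ required by Proposition~\ref{prop:enough}, so that the fiber estimates apply with this same index $n$. The natural cut-point is $k=q_n$: for $k\le q_n$ each fiber has measure exactly $2r$, while for $k>q_n$ each fiber has measure at most $\Vert q_n\alpha\Vert$.

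Next I would estimate the two contributions separately. The small-$k$ part is a union of at most $q_n$ fibers (in fact of the fibers with $k\le\min(q_n,(2r)^{-\beta})$), contributing at most $q_n\cdot 2r$; but a cleaner bound uses only the fibers with $k<(2r)^{-\beta}$, giving at most $(2r)^{-\beta}\cdot 2r=(2r)^{1-\beta}$ — no, the honest route is to bound the number of relevant small-$k$ fibers by $q_n$ and then write $q_n\cdot 2r$. Here is where the left inequality $M\Vert q_n\alpha\Vert^{1/\beta}\le 2r$ enters: combined with Proposition~\ref{prop:bounds}, which gives $\Vert q_n\alpha\Vert<1/q_{n+1}\le 1/q_n$, hence $q_n<1/\Vert q_n\alpha\Vert$, one gets $q_n\cdot 2r<2r/\Vert q_n\alpha\Vert$, and using $2r\le(2r)$ together with $\Vert q_n\alpha\Vert^{1/\beta}\le 2r/M$ one can control $q_n\cdot 2r$ by $(2r)^\beta/M^\beta$ — and since we only count fibers with $k<(2r)^{-\beta}$ anyway, the product of ``number of fibers'' $\le q_n$ with ``$2r$ each'' must be compared against the constraint, yielding a bound of $1/M^\beta$ for this part. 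The large-$k$ part ($q_n<k<(2r)^{-\beta}$) consists of fewer than $(2r)^{-\beta}$ fibers, each of measure at most $\Vert q_n\alpha\Vert$, so its total measure is at most $(2r)^{-\beta}\Vert q_n\alpha\Vert$; the right inequality $2r\le\frac1N\Vert q_{n-1}\alpha\Vert$ together with $\Vert q_{n-1}\alpha\Vert\le$ (something controlling $\Vert q_n\alpha\Vert$ via Proposition~\ref{prop:bounds}, namely $\Vert q_n\alpha\Vert<\Vert q_{n-1}\alpha\Vert$) should turn this into $\le 1/N$.

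The main obstacle — and the step requiring care — is bookkeeping the exponents so that the ``small $k$'' term really collapses to $1/M^\beta$ and the ``large $k$'' term to $1/N$, rather than to some weaker power. The clean way is: for $k<(2r)^{-\beta}$ with $k\le q_n$, the total measure is (number of such $k$)$\times 2r\le q_n\cdot 2r$; now $q_n\cdot 2r\le q_n\cdot\frac1N\Vert q_{n-1}\alpha\Vert$ is not obviously small, so instead use that the number of such $k$ is also $<(2r)^{-\beta}$, giving measure $<(2r)^{-\beta}\cdot 2r=(2r)^{1-\beta}$, which is not $\le 1/M^\beta$ in general either — so the right split is the one forced by Proposition~\ref{prop:enough}: sum $2r$ over $k\le q_n$ (total $q_n\cdot 2r$) and sum $\Vert q_n\alpha\Vert$ over $q_n<k<(2r)^{-\beta}$ (total $<(2r)^{-\beta}\Vert q_n\alpha\Vert$), then plug in $2r\ge M\Vert q_n\alpha\Vert^{1/\beta}$, i.e. $\Vert q_n\alpha\Vert\le(2r/M)^\beta$, into the second sum to get $<(2r)^{-\beta}(2r)^\beta/M^\beta=1/M^\beta$, and plug $q_n<1/\Vert q_{n-1}\alpha\Vert\le$ — wait, $q_n<1/\Vert q_n\alpha\Vert$ only — hmm; the honest bound $q_n\le q_{n-1}^{-1}$ is false, so the first sum $q_n\cdot 2r$ must be bounded using $2r\le\frac1N\Vert q_{n-1}\alpha\Vert<\frac1N\cdot\frac1{q_{n-1}}\le\frac1N\cdot\frac1{q_{n-1}}$ and $q_n\le 2q_{n-1}q_{n-1}$?? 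This is precisely the delicate point: one should instead index things so that the ``$k\le q_n$'' sum is bounded by $q_n\cdot 2r\le 1\cdot$ something, using $\Vert q_{n-1}\alpha\Vert>1/(q_{n-1}+q_n)>1/(2q_n)$ hence $q_n>1/(2\Vert q_{n-1}\alpha\Vert)$, which goes the wrong way — so the resolution must be that one uses $2r\le\frac1N\Vert q_{n-1}\alpha\Vert$ with $q_n\cdot\Vert q_{n-1}\alpha\Vert<q_n/q_n=1$ (from $\Vert q_{n-1}\alpha\Vert<1/q_n$ by Proposition~\ref{prop:bounds}), giving $q_n\cdot 2r\le\frac1N q_n\Vert q_{n-1}\alpha\Vert<\frac1N$. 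With these two bounds, $\mu\{x:\tau_r^\alpha(x)<(2r)^{-\beta}\}\le\frac1N+\frac1{M^\beta}$, as claimed.
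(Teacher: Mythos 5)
Your proposal, after several false starts, converges to exactly the paper's argument: the fibers with $k\le q_n$ contribute $q_n\cdot 2r\le \frac{1}{N}\,q_n\Vert q_{n-1}\alpha\Vert<\frac{1}{N}$ via Proposition~\ref{prop:bounds}, and the fibers with $q_n<k<(2r)^{-\beta}$ contribute at most $(2r)^{-\beta}\Vert q_n\alpha\Vert\le \frac{1}{M^\beta}$ via $2r\ge M\Vert q_n\alpha\Vert^{1/\beta}$. This is correct and is essentially the same proof as in the paper; only the exploratory detours (in particular the initial, mistaken assignment of which hypothesis controls which term) should be excised from a final write-up.
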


\begin{proof}
From hypothesis 
\begin{equation*}
(2r)^{-\beta} \leq \frac{1}{M^\beta \Vert q_n \alpha \Vert},
\end{equation*}
thus 
\begin{equation*}
\lbrace x: \tau_r^\alpha(x) < (2r)^{-\beta} \rbrace \subset
\lbrace x: \tau_r^\alpha(x) < \frac{1}{M^\beta \Vert q_n \alpha \Vert} \rbrace.
\end{equation*}
Last set can be thought as a finite union and its measure given by
\begin{equation*}
\sum_{k \in \mathbb{N}, 1 \leq k \leq q_n} \mu \lbrace x: \tau_r^\alpha(x) = k \rbrace
\quad + \quad \sum_{k \in \mathbb{N}, q_n < k < \frac{1}{M^\beta \Vert q_n \alpha \Vert}}
       \mu \lbrace x: \tau_r^\alpha(x) = k \rbrace
\end{equation*}
so Proposition \ref{prop:enough} and Proposition \ref{prop:bounds} imply: 

\begin{eqnarray*}
\mu \lbrace \tau_r^\alpha < \frac{1}{M^\beta \Vert q_n \alpha \Vert} \rbrace & \leq &
2r \cdot q_n + \Vert q_n \alpha \Vert \cdot \frac{1}{M^\beta \Vert q_n\alpha \Vert} \leq 
\frac{1}{N} \Vert q_{n-1} \alpha \Vert q_n + \frac{1}{M^\beta} \leq \\
& \leq & \frac{1}{N} + \frac{1}{M^\beta}.
\end{eqnarray*}
\end{proof}

\begin{proof}
(of Theorem \ref{thm:infliminf}) We denote by $\mu^2$ the Lebesgue measure
on $\mathbb{T}^2$ and by $\mu$ the Lebesgue measure on $\mathbb{S}^1$.

Since we consider a ratio of logarithms, the $\liminf $ is preserved by
considering the limit along the sequence $r_{i}:=e^{-i}$ (see lemma \ref%
{lemmino}).

Take $\beta <\gamma $. Our interest lies in this sequence of subsets of the
two-torus: 
\begin{equation*}
\overline{A}_{i}:=\{x \in \mathbb{T}^2:\tau _{r_{i}}^{(\alpha ,\alpha ^{\prime })}(%
x)<(2r_{i})^{-\beta }\}=\{x \in \mathbb{T}^2:\frac{\log \tau
_{r_{i}}^{(\alpha ,\alpha ^{\prime })}(x)}{-\log r_{i}}+%
\frac{\beta \log 2}{-\log r_{i}}<\beta \}
\end{equation*}%
if we prove that the measures of $\overline{A}_{i}$ are summable,
Borel-Cantelli lemma will imply that the measure of their set-theoretic $%
\limsup $ (set of points that fall infinitely often in the sequence) is
zero. Thus we have 
\begin{equation*}
\mu^2 \{\liminf_{i}\frac{\log \tau _{r_{i}}^{(\alpha ,\alpha ^{\prime })}
}{-\log r_{i}}<\beta \}\leq \mu^2 (\limsup_{i}\{\frac{%
\log \tau _{r_{i}}^{(\alpha ,\alpha ^{\prime })}}{-\log
r_{i}}<\beta \})=0
\end{equation*}%
The thesis follows taking $\beta $ arbitrarily near to $\gamma $.

We reduce the two dimensional problem to a one-dimensional one by defining
the following subsets of the circle 
\begin{equation*}
A_{i}:=\{x \in \mathbb{S}^1:\tau _{r_{i}}^{\alpha }(x)<(2r_{i})^{-\beta }\}
\end{equation*}%
\begin{equation*}
A_{i}^{\prime }:=\{x \in \mathbb{S}^1:\tau _{r_{i}}^{\alpha ^{\prime }}(x)<(2r_{i})^{-\beta
}\}
\end{equation*}%
and observing that $\mu ^{2}(\overline{A}_{i})\leq \min (\mu (A_{i}),\mu
(A_{i}^{\prime }))$.

To prove that $\overline{A}_{i}$ are summable we will sometimes bound its
measure with $\mu (A_{i})$, sometimes with $\mu (A_{i}^{\prime })$. In fact,
we will prove that two appropriate subsequences of $A_{i}$ and $%
A_{i}^{\prime }$ are summable and that the union of the indexes of the
subsequences covers a neighbourhood of infinity.

The sets of indexes we take are, respectively, $\bigcup_{n}I_{n}$ and $%
\bigcup_{n}I_{n}^{\prime }$ where $I_{n}$ and $I_{n}^{\prime }$ are used to
group subsets of consecutive indexes: 
\begin{equation*}
I_{n}:=\{i>0:M_{n}\Vert q_{n}\alpha \Vert ^{\frac{1}{\beta }}\leq 2r_{i}\leq 
\frac{1}{N_{n}}\Vert q_{n-1}\alpha \Vert \},
\end{equation*}%
\begin{equation*}
I_{n}^{\prime }:=\{i>0:M_{n}^{\prime }\Vert q_{n}^{\prime }\alpha ^{\prime
}\Vert ^{\frac{1}{\beta }}\leq 2r_{i}\leq \frac{1}{N_{n}^{\prime }}\Vert
q_{n-1}^{\prime }\alpha ^{\prime }\Vert \};
\end{equation*}%
this particular choice of $M_n, M_n^{\prime},N_n, N_n^{\prime}$ will serve
our purposes 
\begin{equation}  \label{eq:mncondition}
M_{n}=n ,M_{n}^{\prime }=n\quad N_{n}=\frac{1}{M_{n-1}^{\prime }}\frac{\Vert
q_{n-1}\alpha \Vert }{\Vert q_{n-1}^{\prime }\alpha ^{\prime }\Vert ^{\frac{1%
}{\beta }}}\quad N_{n}^{\prime }=\frac{1}{M_{n}}\frac{\Vert q_{n-1}^{\prime
}\alpha ^{\prime }\Vert }{\Vert q_{n}\alpha \Vert ^{\frac{1}{\beta }}}.
\end{equation}

If the two sequences of intervals 
\begin{equation}
\lbrack M_{n}\Vert q_{n}\alpha \Vert ,\frac{1}{N_{n}}\Vert q_{n-1}\alpha
\Vert ],\quad \lbrack M_{n}^{\prime }\Vert q_{n}^{\prime }\alpha ^{\prime
}\Vert ^{\frac{1}{\beta }},\frac{1}{N_{n}^{\prime }}\Vert q_{n-1}^{\prime
}\alpha ^{\prime }\Vert ]  \label{eq:intervals}
\end{equation}%
are eventually non-empty then they cover a neighbourhood of zero from the right
(because they alternate and equation (\ref{eq:mncondition}) forces them to have equal
extremes or overlap), thus $\bigcup_{n}I_{n}\cup I_{n}^{\prime }$ covers a
neighbourhood of infinity.

Now we use the fact that $(\alpha ,\alpha ^{\prime })\in Y_{\gamma }$ and
Proposition \ref{prop:bounds} to show that the ratio between the extremes of
the intervals in (\ref{eq:intervals}) eventually grows to infinity (being in
particular bigger than 1 and then forcing intervals to be not empty): 
\begin{equation*}
\frac{\frac{1}{N_{n}}\Vert q_{n-1}\alpha \Vert }{M_{n}\Vert q_{n}\alpha
\Vert ^{\frac{1}{\beta }}}=\frac{M_{n-1}^{\prime }\Vert q_{n-1}^{\prime
}\alpha ^{\prime }\Vert ^{\frac{1}{\beta }}}{M_{n}\Vert q_{n}\alpha \Vert ^{%
\frac{1}{\beta }}}\geq \frac{n-1}{n}\left( \frac{q_{n+1}}{2q_{n}^{\prime }}%
\right) ^{\frac{1}{\beta }}\geq \frac{n-1}{n}\left( \frac{{q_{n}^{\prime }}%
^{\gamma }}{2{q_{n}^{\prime }}}\right) ^{\frac{1}{\beta }}\rightarrow \infty
\end{equation*}%
\begin{equation*}
\frac{\frac{1}{N_{n}^{\prime }}\Vert q_{n-1}^{\prime }\alpha ^{\prime }\Vert 
}{M_{n}^{\prime }\Vert q_{n}^{\prime }\alpha ^{\prime }\Vert ^{\frac{1}{%
\beta }}}=\frac{M_{n}\Vert q_{n}\alpha \Vert ^{\frac{1}{\beta }}}{%
M_{n}^{\prime }\Vert q_{n}^{\prime }\alpha ^{\prime }\Vert ^{\frac{1}{\beta }%
}}\geq \left( \frac{q_{n+1}^{\prime }}{2q_{n+1}}\right) ^{\frac{1}{\beta }%
}\geq \left( \frac{q_{n+1}^{\gamma }}{2q_{n+1}}\right) ^{\frac{1}{\beta }%
}\rightarrow \infty
\end{equation*}

To prove summability we need a trick to apply Lemma \ref{lem:keyl} to
the whole bunch of $r_i$ contained in a single set $I_n$. This is based on a
simple remark: if $2r \in [M \Vert q_n \alpha \Vert^{\frac{1}{\beta}}, \frac{%
1}{N} \Vert q_{n-1} \alpha \Vert]$ then $2r \cdot c \in [cM \Vert q_n \alpha
\Vert^{\frac{1}{\beta}}, \frac{1}{N/c} \Vert q_{n-1} \alpha \Vert]$.

Let $\ell_n:= \lceil \log \frac{\frac{1}{N_n} \Vert q_{n-1}
\alpha \Vert}{M_n\Vert q_n \alpha \Vert^{\frac{1}{\beta}}} \rceil$ be a bound to the
cardinality of $I_n$. We apply $\ell_n$ times lemma \ref{lem:keyl} with $%
M=M_n, M_n \cdot e, \ldots, M_n \cdot e^{\ell_n-1}$ and $N=N_n \cdot
e^{\ell_n-1}, N_n \cdot e^{\ell_n-2}, \ldots, N_n$: 
\begin{eqnarray*}
\sum_{i \in I_n} \mu(A_i) & \leq & \frac{1}{N_n} \left( \frac{1}{e^{\ell_n-1}%
} + \frac{1}{e^{\ell_n-2}} + \ldots + 1 \right) + \frac{1}{M_n^\gamma}
\left( 1 + \frac{1}{e^\gamma} + \ldots + \frac{1}{e^{\gamma (\ell_n-1)}}
\right) \\
&\leq & \frac{1}{N_n} \left( \frac{1}{1-e^{-1}} \right) + \frac{1}{M_n^\gamma%
} \left( \frac{1}{1-e^{-\gamma}} \right).
\end{eqnarray*}

This argument applies equivalently to primed sequence. $\gamma >1$ so $\frac{%
1}{M_{n}^{\gamma }},\frac{1}{{M_{n}^{\prime }}^{\gamma }}$ are summable.
Last step is to show summability of $\frac{1}{N_{n}},\frac{1}{N_{n}^{\prime }%
}$: 
\begin{equation*}
N_{n}=\frac{\Vert q_{n-1}\alpha \Vert }{M_{n-1}^{\prime }\Vert
q_{n-1}^{\prime }\alpha ^{\prime }\Vert ^{\frac{1}{\beta }}}\geq \frac{1}{n-1%
}\frac{{q_{n}^{\prime }}^{\frac{1}{\beta }}}{2q_{n}}\geq \frac{1}{2(n-1)}%
q_{n}^{\frac{\gamma }{\beta }-1}
\end{equation*}%
\begin{equation*}
N_{n}^{\prime }=\frac{\Vert q_{n-1}^{\prime }\alpha ^{\prime }\Vert }{%
M_{n}\Vert q_{n}\alpha \Vert ^{\frac{1}{\beta }}}\geq \frac{1}{n}\frac{%
q_{n+1}^{\frac{1}{\beta }}}{2q_{n}^{\prime }}\geq \frac{1}{2n}{q_{n}^{\prime
}}^{\frac{\gamma }{\beta }-1}
\end{equation*}%
But $\gamma /\beta -1>0$ and by Proposition \ref{prop:fibo} both $%
q_{n},q_{n}^{\prime }$ eventually grow faster than any power of $n$ so $%
N_{n},N_{n}^{\prime }\geq n^{\delta }$ for some $\delta >1$.
\end{proof}

\begin{remark}
\label{rem:chevallier} 
A statement similar to theorem \ref{thm:infliminf} follows from a result in
a paper by Bugeaud and Chevallier (\cite{BC}). From their Theorem 2 it
follows that for every function $\phi: \mathbb{N} \to \mathbb{R}^+$
tending to $0$, there
exist rationally independent  $\overline{\alpha}=(\alpha_1,\alpha_2) \in \mathbb{R}^2$ such that the
following subset of $\mathbb{R}^2$ has zero Lebesgue measure: 
\begin{equation*}
\mathcal{W}_\phi (\overline{\alpha}) = \left\{ (x_1,x_2) \in \mathbb{R}^2 :
\max_{i=1,2} \lbrace \Vert n \alpha_i - x_i \Vert \rbrace < \phi(\vert n
\vert) \text{ holds for infinitely many } n \in \mathbb{Z} \right\}.
\end{equation*}
With an appropriate choice of $\phi$ and passing to the complement, this
implies that for a fixed $\gamma$ there exists a vector $\overline{\alpha}$
such that the following subset of $\mathbb{T}^2$ has full Lebesgue measure: 
\begin{equation*}
\left\{ \overline{x} \in \mathbb{T}^2 : \frac{-\log \mathrm{dist}(T_{%
\overline{\alpha}}^{n}(\overline{x}),(0,0))}{\log n} \leq \frac{1}{\gamma} 
\text{ definitively for } n \in \mathbb{N} \right\}
\end{equation*}
By Propositions \ref{prop:htll} and \ref{prop:bosheq} this is equivalent to
the existence of a translation on the torus such that $\underline{R}(%
\overline{x},(0,0))\geq \gamma$ for Lebesgue almost every $\overline{x} \in 
\mathbb{T}^2$.
\end{remark}

\section{Reparametrization of flows and hitting time\label{fow}}

In this section we show that the time one map of the mixing flow on the $3$%
-torus constructed by Fayad (see Theorem \ref{fayadex}) has properties
similar to the ones described in Theorem \ref{thm:infliminf}. Fayad's
example is a flow, hence we also consider the concept of hitting time in the
context of continuous time dynamical systems:

\begin{definition}
Let $\Phi _{t}:X\rightarrow X$ be a flow on $X$. Take $x\in X$ and consider
the time needed for $x$ to enter in a ball $B_{r}(x_{0})$: 
\begin{equation*}
\tau _{r}^{(\Phi _{t})}(x,x_{0}):=\inf \{t>0\mid \Phi _{t}(x)\in
B_{r}(x_{0})\}.
\end{equation*}
\end{definition}

Working as in (\ref{expo}), this definition naturally provides a definition
of hitting time indicator for continous systems and which we will denote as $%
R^{(\Phi _{t})}$ (while the indicator of the discrete system given by a map $%
T$ will be denoted by $R^{(T)}$).

We now consider the relations between hitting time in flows and in
associated discrete time systems, as time-$1$ maps or Poincar\'{e} sections.
As a first trivial remark we note that the hitting time indicator of a flow
and its time-$1$ map can be quite different, hence we have to be careful to
specify when discrete or continuous time is considered. In fact, think of a
rotation on the circle by an irrational number $\alpha $ of type 1 and
consider the associated flow given by $\Phi _{t}(x)=x+t\alpha \pmod 1$ which
is such that $T_{\alpha }=\Phi _{1}$. In this case we have that for each
fixed $x_0$ 
\begin{equation*}
R^{(T_{\alpha })}(x,x_{0})=1\quad \mathit{while}\quad R^{(\Phi
_{t})}(x,x_{0})=0
\end{equation*}%
for almost every $x$. In general however we have the following immediate
relation between the flow and the time $1$ associated map

\begin{proposition}[Flow and time $1$ map]
\label{map1}If $\Phi _{t}$ is a flow, as above, we have that if $r$ is small
enough%
\begin{equation*}
\tau _{r}^{(\Phi _{1})}(x,y)\geq \tau _{r}^{(\Phi _{t})}(x,y)
\end{equation*}%
for every $x,y$ and hence%
\begin{equation*}
\overline{R}^{(\Phi _{1})}(x,y)\geq \overline{R}^{(\Phi _{t})}(x,y)~and~%
\underline{R}^{(\Phi _{1})}(x,y)\geq \underline{R}^{(\Phi _{t})}(x,y).
\end{equation*}
\end{proposition}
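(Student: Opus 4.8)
The plan is to observe that the orbit of the time-$1$ map is a sub-orbit of the flow orbit, so that anything reachable by iterating $\Phi_1$ is reachable by the flow at least as quickly. More precisely, if $\tau^{(\Phi_1)}_r(x,y)=n$ then $\Phi_n(x)=\Phi_1^n(x)\in B_r(y)$, and since $n>0$ is an admissible time for the flow we get $\tau^{(\Phi_t)}_r(x,y)\le n=\tau^{(\Phi_1)}_r(x,y)$. The only point requiring the hypothesis ``$r$ small enough'' is to guarantee that $\tau^{(\Phi_1)}_r(x,y)$ is actually defined (i.e.\ the integer-time orbit does enter $B_r(y)$); for an ergodic flow on a compact space with the reference measure this holds for $\mu$-almost every $x$ once $r$ is small, and in the situations we apply this to (the torus translation flows of Theorem \ref{fayadex}) the time-$1$ map is itself ergodic, so $\tau^{(\Phi_1)}_r$ is a.e.\ finite for every $r$. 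When $\tau^{(\Phi_1)}_r(x,y)$ is undefined both indicators $\overline R^{(\Phi_1)},\underline R^{(\Phi_1)}$ are $+\infty$ by convention and the inequalities hold trivially.

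From the pointwise inequality $\tau^{(\Phi_1)}_r(x,y)\ge\tau^{(\Phi_t)}_r(x,y)$, valid for all small $r$, I would pass to the indicators by applying $\log(\cdot)/(-\log r)$ (a positive quantity for $r<1$) and taking $\limsup_{r\to 0}$ and $\liminf_{r\to 0}$, using monotonicity of these operations:
\begin{equation*}
\overline R^{(\Phi_1)}(x,y)=\limsup_{r\to 0}\frac{\log\tau^{(\Phi_1)}_r(x,y)}{-\log r}\ge\limsup_{r\to 0}\frac{\log\tau^{(\Phi_t)}_r(x,y)}{-\log r}=\overline R^{(\Phi_t)}(x,y),
\end{equation*}
and identically with $\liminf$ in place of $\limsup$. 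This gives both displayed inequalities.

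I do not expect any real obstacle here; the statement is essentially immediate from the definitions. The only subtlety worth stating carefully is the ``$r$ small enough'' caveat and the convention that makes both sides behave correctly when the discrete hitting time fails to be defined — this is exactly why the proposition is phrased with ``if $r$ is small enough'' and why the circle-rotation remark preceding it (where $R^{(T_\alpha)}=1$ but $R^{(\Phi_t)}=0$) shows the inequality can be strict. No additional hypotheses on the flow are needed for the pointwise inequality itself.
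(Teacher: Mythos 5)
Your argument is correct and is exactly the intended one: the paper states this proposition as an ``immediate relation'' and gives no proof, the point being precisely your observation that $\Phi_1^n=\Phi_n$, so any integer hitting time is an admissible flow time and $\tau_r^{(\Phi_t)}\leq\tau_r^{(\Phi_1)}$, after which monotonicity of $\log(\cdot)/(-\log r)$ and of $\limsup/\liminf$ gives the indicator inequalities. Your extra care about the case where the discrete hitting time is undefined (handled by the paper's convention that the indicators are then $+\infty$) is consistent with the paper and does not change the argument.
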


Now we want to consider the case of a Poincar\'{e} section of a
translation on the $d$-torus $\mathbb{T}^{d}=\mathbb{R}^{d}/\mathbb{Z}^{d}$.
We introduce some general notation which will be used in the following. We
equip $\mathbb{T}^{d}$ with the $\sup $ distance: if $x=(x_{1},\ldots
,x_{d}) $ and $y=(y_{1},\ldots ,y_{d})$ then $\mathrm{dist}%
(x,y)=\sup_{i}(\Vert x_{i}-y_{i}\Vert) $. A translation by a vector $\alpha
=(\alpha _{1},\ldots ,\alpha _{d})\in R^{d}$ on $\mathbb{T}^{d}$ is the
function $T_{\alpha }:\mathbb{T}^{d}\rightarrow \mathbb{T}^{d}$ given by 
\begin{equation*}
(x_{1},\ldots ,x_{n})\mapsto (x_{1}+\alpha _{1},\ldots ,x_{n}+\alpha _{n}).
\end{equation*}%
The translation is said to be irrational if the numbers $1,\alpha_{1},\ldots
,\alpha _{d}$ are rationally independent. In this case $T_{\alpha }$ is
uniquely ergodic and its invariant measure is the Lebesgue one.

Now let us consider the translation flow $\Phi _{t}$ on
$\mathbb{T}^{d}$ with direction $\alpha \in \mathbb{R}^{d}$. This is the flow generated integrating the
constant vector field $X(x)=\alpha $. As above, the flow is said to be
irrational if $\alpha _{1},\ldots ,\alpha _{d}$ are rationally independent.
Also in this case $\Phi _{t}$ is uniquely ergodic, and the
invariant measure is the Lebesgue one (\cite{fayad}).

Let $\Phi _{t}$ be a translation flow on $\mathbb{T}^{d}$ and let $%
\mathbb{T}^{d-1}$ be the torus of codimension $1$ given by $x_{d}=c$ (we
will consider on $\mathbb{T}^{d-1}$ the metric induced by the inclusion in $%
\mathbb{T}^{d}$). Let us consider the map $T_{\mathbb{T}^{d-1}}^{\Phi }:\mathbb{T}^{d-1}\rightarrow \mathbb{T}^{d-1}$ which is the Poincar\'{e} section of $%
\Phi _{t}$ on $\mathbb{T}^{d-1}$.  Let $\pi (x)=\Phi _{t_{0}} (x)$ with $t_{0}=\inf \{t>0\mid \Phi _{t}(x)\in \mathbb{T}%
^{d-1}\} $ be the "projection" on $\mathbb{T}^{d-1}$ induced by $\Phi
_{t}$.
 Then the following relation holds

\begin{proposition}[Flow and a section]
\label{sect}Let $T_{\mathbb{T}^{d-1}}^{\Phi }$ as above and $y\in \mathbb{T}%
^{d-1}$, let $x\in \mathbb{T}^{d}$. Then there are constants $K,C>0$
which depends also on the parametrization speed and the angle between flow and section, such that 
\begin{equation*}
\tau _{r}^{(\Phi _{t})}(x,y)\geq C\tau _{Kr}^{(T_{\mathbb{T}^{d-1}}^{\Phi
})}(\pi (x),y)
\end{equation*}%
for every $x$ such that $\pi (x)\neq y$, when $r$ is small enough. Then
under these assumptions%
\begin{equation*}
\overline{R}^{(\Phi _{1})}(x,y)\geq \overline{R}^{(T_{\mathbb{T}%
^{d-1}}^{\Phi })}(\pi (x),y)~and~\underline{R}^{(\Phi _{1})}(x,y)\geq 
\underline{R}^{(T_{\mathbb{T}^{d-1}}^{\Phi })}(\pi (x),y).
\end{equation*}
\end{proposition}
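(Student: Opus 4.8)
The plan is to reduce the flow hitting time to the Poincaré section hitting time by comparing, for a given small radius $r$, the ball $B_r(y) \subset \mathbb{T}^{d-1}$ with the set of points in $\mathbb{T}^d$ whose flow orbit visits a neighbourhood of $y$ before time $r$. First I would set up coordinates: write the flow direction as $\alpha = (\alpha_1,\ldots,\alpha_d)$ with $\alpha_d \neq 0$ (this is guaranteed up to relabelling since the flow is irrational, hence all components are nonzero), so that the return time to the section $\{x_d = c\}$ is the constant $t_{\mathrm{ret}} = 1/|\alpha_d|$ and the Poincaré map $T^\Phi_{\mathbb{T}^{d-1}}$ is itself a translation of $\mathbb{T}^{d-1}$ by the vector $(\alpha_1/\alpha_d,\ldots,\alpha_{d-1}/\alpha_d)$ (after rescaling, in the unit-speed case; in the general positive-speed reparametrized case the return time is no longer constant but is bounded above and below by positive constants, which is all we need). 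The angle between the flow and the section being bounded away from $0$ and $\pi$ is exactly what makes the geometric comparison constants $K, C$ finite and positive.

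Next I would prove the key inequality $\tau_r^{(\Phi_t)}(x,y) \geq C\,\tau_{Kr}^{(T^\Phi_{\mathbb{T}^{d-1}})}(\pi(x),y)$. Suppose the flow orbit of $x$ hits $B_r(y)$ at some time $t^* = \tau_r^{(\Phi_t)}(x,y)$, so $\mathrm{dist}(\Phi_{t^*}(x), y) < r$. The point $\Phi_{t^*}(x)$ lies within flow-time at most $t_{\mathrm{ret}}$ of the section, so there is an integer $k$ with $(T^\Phi_{\mathbb{T}^{d-1}})^k(\pi(x)) = \Phi_{t_k}(x)$ for some $t_k$ with $|t_k - t^*| \leq t_{\mathrm{ret}}$, and $k \leq t^*/\inf(\text{return time})$, giving the constant $C = \inf(\text{return time})$ (or a fixed multiple thereof, to absorb the $+1$ and boundary issues). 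Moving from $\Phi_{t^*}(x)$ to $\Phi_{t_k}(x)$ changes the position by at most $\|\alpha\|_\infty\, t_{\mathrm{ret}}$ along the flow, but to land on the section we must also project; bounding the transverse displacement picked up in this short flow segment by $K'r$ is where the angle hypothesis enters, and by the triangle inequality $\mathrm{dist}((T^\Phi_{\mathbb{T}^{d-1}})^k(\pi(x)), y) < r + K'(\text{flow speed})\, t_{\mathrm{ret}} =: Kr$ once $r$ is small enough that the relevant balls stay inside a single coordinate chart. Hence $\tau_{Kr}^{(T^\Phi)}(\pi(x),y) \leq k \leq t^*/C$, which is the claim; the hypothesis $\pi(x) \neq y$ just guarantees the right-hand hitting time is a well-defined positive integer (excluding the degenerate orbit through $y$).

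Finally I would deduce the indicator inequalities. Taking $\log$ of $\tau_r^{(\Phi_1)}(x,y) \geq \tau_r^{(\Phi_t)}(x,y) \geq C\,\tau_{Kr}^{(T^\Phi)}(\pi(x),y)$ — the first step by Proposition \ref{map1} — and dividing by $-\log r$, the multiplicative constant $C$ contributes $\frac{\log C}{-\log r} \to 0$, and replacing $r$ by $Kr$ changes the denominator by $\frac{\log K}{-\log r} \to 0$ as well, so both $\limsup$ and $\liminf$ survive the comparison unchanged, yielding $\overline{R}^{(\Phi_1)}(x,y) \geq \overline{R}^{(T^\Phi_{\mathbb{T}^{d-1}})}(\pi(x),y)$ and likewise for $\underline{R}$. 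I expect the main obstacle to be the geometric bookkeeping in the second step: making precise, in the reparametrized (non-constant return time) setting, that the transverse displacement incurred while sliding an interior point onto the section is linearly controlled by $r$ with a constant depending only on the (bounded) speed and the (bounded-away-from-degenerate) angle — the uniformity in $x$ and the "$r$ small enough" qualifier both come from this estimate, and care is needed so that wrap-around on the torus does not spoil the local linear comparison.
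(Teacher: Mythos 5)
The paper states Proposition \ref{sect} without proof, so there is nothing to compare against; your argument is the natural one the authors evidently have in mind, and it is essentially correct: a point of $\mathbb{T}^d$ within sup-distance $r$ of $y\in\{x_d=c\}$ has $d$-th coordinate within $r$ of $c$, hence reaches the section in flow-time at most $r/|\alpha_d|$, picking up a transverse displacement at most $r\max_i|\alpha_i|/|\alpha_d|$; the resulting crossing point is $(T^{\Phi}_{\mathbb{T}^{d-1}})^k(\pi(x))$ with $k\geq 1$ (the case $k=0$, and the case where the nearest crossing lies in the past of $\pi(x)$, are exactly what the hypothesis $\pi(x)\neq y$ together with ``$r$ small enough'' rule out) and $k\leq C^{-1}\tau_r^{(\Phi_t)}(x,y)$, after which the multiplicative constants vanish in the ratio of logarithms. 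One spot needs repair before this is a proof rather than a plan: the displayed bound $\mathrm{dist}((T^{\Phi}_{\mathbb{T}^{d-1}})^k(\pi(x)),y)<r+K'(\text{flow speed})\,t_{\mathrm{ret}}$ is wrong as written, since the second term is a constant independent of $r$ and the right-hand side is then not of the form $Kr$; the correct estimate uses that the sliding time is $O(r)$ (not merely $O(t_{\mathrm{ret}})$), giving $r+r\max_i|\alpha_i|/|\alpha_d|=Kr$ as you in fact indicate in the surrounding prose. With that formula fixed, and the degenerate small-$t^*$ cases dispatched as above, the proof is complete.
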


We remark that if a flow is irrational then also its Poincar\'e section, as
defined above is irrational.

Now we study invariance of continous hitting time indicator with respect to
a wide class of time reparametrizations. Given a vector $\alpha $ and a
striclty positive, smooth function $\phi :\mathbb{T}^{d}\rightarrow \mathbb{R%
}$ we can define the reparametrized translation flow $\Phi _{t}^{\phi
,\alpha }$ with velocity $\phi $, as the flow given by the vector field $%
\phi (x)\alpha $, that is by integrating the system 
\begin{equation*}
\dot{x}=\phi (x)\alpha .
\end{equation*}

The new flow has the same orbits as the original one and preserves a measure
which is absolutely continuous with respect to the Lesbegue measure having
density $\frac{1}{\phi (x)}.$ Moreover if the original flow is ergodic then
also the reparametrized one is. Under quite general assumptions a regular
reparametrizations $\phi $, does not change the hitting time behavior of the
flow.

\begin{proposition}[Reparametrization of a flow]
\label{reparametriz}If $\phi $ is $C^{1}$ and strictly positive on the
(compact) torus $\mathbb{T}^{d}$ as above then there exists a constant $%
C\geq 1$ such that $\frac{1}{C}<\phi (x)<C$ for all $x \in \mathbb{T}^{d}$.
Thus 
\begin{equation*}
C\tau _{r}^{(\Phi _{t})}(x,y)\geq \tau _{r}^{(\Phi _{t}^{\phi
,\alpha })}(x,y)\geq \frac{1}{C}\tau _{r}^{(\Phi _{t})}(x,y);
\end{equation*}
this implies 
\begin{equation*}
R^{(\Phi _{t})}(x,y)=R^{(\Phi _{t}^{\phi ,\alpha })}(x,y).
\end{equation*}
\end{proposition}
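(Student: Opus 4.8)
The plan is to prove Proposition \ref{reparametriz} by comparing, orbit by orbit, the time parametrizations of $\Phi_t$ and $\Phi_t^{\phi,\alpha}$ along their common orbits, and then to read off the hitting time comparison from it. Since $\phi$ is $C^1$ and strictly positive on the compact torus $\mathbb{T}^d$, it attains a positive minimum and a finite maximum, so there is a constant $C\geq 1$ with $\frac1C<\phi(x)<C$ for every $x$; this is the first (trivial) step.

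The core observation is that $\Phi_t$ and $\Phi_t^{\phi,\alpha}$ trace the same set of orbits, only at different speeds. Fix $x$ and let $\gamma(s)=\Phi_s(x)$ be the original orbit. The reparametrized orbit is $\Phi_t^{\phi,\alpha}(x)=\gamma(\sigma(t))$, where $\sigma$ is determined by $\dot\sigma(t)=\phi(\gamma(\sigma(t)))$, $\sigma(0)=0$; equivalently, the original time needed to traverse the same arc is $s$, and the reparametrized time is $t=\int_0^s \frac{du}{\phi(\gamma(u))}$. From $\frac1C<\phi<C$ one gets $\frac{s}{C}\le t\le Cs$ for the two times corresponding to the same point on the orbit. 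In other words, if $\Phi_s(x)$ and $\Phi_t^{\phi,\alpha}(x)$ are the same point of $\mathbb{T}^d$, then $\frac1C s\le t\le Cs$.

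Applying this to the first entrance into $B_r(y)$: let $s_0=\tau_r^{(\Phi_t)}(x,y)$ be the first original time with $\Phi_{s_0}(x)\in B_r(y)$, and let $t_0=\tau_r^{(\Phi_t^{\phi,\alpha})}(x,y)$ be the analogous reparametrized time. Since both flows run along the same orbit in the same direction, the first hit happens at the same point of the orbit, so $t_0$ is exactly the reparametrized time corresponding to original time $s_0$; hence $\frac1C s_0\le t_0\le C s_0$, which is the displayed double inequality $C\tau_r^{(\Phi_t)}(x,y)\ge\tau_r^{(\Phi_t^{\phi,\alpha})}(x,y)\ge\frac1C\tau_r^{(\Phi_t)}(x,y)$. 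Taking logarithms, dividing by $-\log r$, and letting $r\to 0$, the bounded multiplicative factor $C$ contributes $\pm\frac{\log C}{-\log r}\to 0$, so $\overline R$ and $\underline R$ are unchanged, giving $R^{(\Phi_t)}(x,y)=R^{(\Phi_t^{\phi,\alpha})}(x,y)$.

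The only genuinely delicate point is making precise that ``the first hit happens at the same point of the orbit'': one must check that the infimum defining $\tau_r$ is realized along the orbit consistently under the reparametrization, i.e.\ that the monotone change of variable $s\mapsto t$ sends $\{s>0:\Phi_s(x)\in B_r(y)\}$ bijectively and increasingly onto $\{t>0:\Phi_t^{\phi,\alpha}(x)\in B_r(y)\}$, so infima correspond. This is immediate because $\sigma$ is a strictly increasing bijection of $[0,\infty)$ onto $[0,\infty)$ (its derivative $\phi\circ\gamma$ is bounded between $\frac1C$ and $C$), but it is worth stating explicitly; everything else is the routine estimate above.
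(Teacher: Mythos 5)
Your proof is correct and is exactly the argument the paper has in mind (the paper states this proposition without proof, treating it as immediate): compactness gives the two-sided bound on $\phi$, the time change $\sigma$ is a bi-Lipschitz increasing bijection of $[0,\infty)$ matching the two flows point-by-point along the common orbit, so first hitting times agree up to the factor $C$, and the multiplicative constant vanishes in the logarithmic limit. Your explicit check that $\sigma$ carries the hitting set of one flow onto that of the other, so that the infima correspond, is the right point to make precise.
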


Combining the above propositions we obtain a way to estimate from below the
hitting time indicators of time-$1$ maps of reparametrizations of rotations
(or even other kind of maps constructed in a similar way) by the hitting
time of discrete time translations with the same angles.

\begin{proposition}
\label{DUE} Let $\Phi =\Phi _{1}^{(\phi ,\mathbf{\alpha })}$ be the time one
map of a bounded speed reparametrization of a translation flow on the
direction vector $\mathbf{\alpha }=(1,\alpha ,\alpha ^{\prime })$ and $T=T_{%
\mathbb{T}^{2}}^{(\alpha ,\alpha ^{\prime })}$ be the translation on the two
torus given by rationally independent angles $(\alpha ,\alpha ^{\prime })$, then for any $y\in 
\mathbb{T}^{3}$, $b\in \mathbb{T}^{2}$ 
\begin{equation*}
\overline{R}^{\Phi }(x,y)\geq \overline{R}^{T}(a,b)
\end{equation*}%
\begin{equation*}
\underline{R}^{\Phi }(x,y)\geq \underline{R}^{T}(a,b)
\end{equation*}%
for almost each $(a,x)$ (with respect to the Lesbegue measure in the product 
$\mathbb{T}^{2}\times \mathbb{T}^{3}$).
\end{proposition}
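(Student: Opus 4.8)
The plan is to deduce the two inequalities by composing the three previous propositions of this section, after a cosmetic normalization of coordinates. Permuting the coordinates of $\mathbb{T}^{3}$ is an isometry of the $\sup$ metric and carries the reparametrizing function $\phi$ to another $C^{1}$ strictly positive function, so by Proposition \ref{inizz} it leaves every hitting time indicator unchanged; hence we may assume the direction vector is $(\alpha,\alpha^{\prime},1)$, with the speed-$1$ coordinate last. For this normalized direction the codimension-$1$ torus $\Sigma:=\{x_{3}=y_{3}\}$ is transverse to the linear translation flow $\Phi_{t}$, the first-return time of $\Phi_{t}$ to $\Sigma$ is identically $1$, and, identifying $\Sigma$ with $\mathbb{T}^{2}$ (with the induced $\sup$ metric), the Poincar\'e return map $T^{\Phi}_{\Sigma}$ is precisely the translation $T=T^{(\alpha,\alpha^{\prime})}_{\mathbb{T}^{2}}$. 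Write $b_{y}\in\mathbb{T}^{2}\cong\Sigma$ for $y$ in these section coordinates and $\pi:\mathbb{T}^{3}\to\Sigma$ for the projection along $\Phi_{t}$.

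The heart of the argument is the chain
\[
\overline R^{\Phi}(x,y)\;\ge\;\overline R^{(\Phi^{\phi,\alpha}_{t})}(x,y)\;=\;\overline R^{(\Phi_{t})}(x,y)\;\ge\;\overline R^{T}(\pi(x),b_{y}),
\]
where the first inequality is Proposition \ref{map1} applied to the reparametrized flow $\Phi^{\phi,\alpha}_{t}$ (whose time-$1$ map is $\Phi$), the equality is Proposition \ref{reparametriz} ($\phi$ being $C^{1}$ and bounded away from $0$ and $\infty$ on the compact torus), and the last inequality is Proposition \ref{sect} applied to $\Phi_{t}$ and the section $\Sigma$ — the multiplicative constants $C,K$ there are absorbed on passing to the $\limsup$ of the ratio of logarithms, and the hypothesis $\pi(x)\ne b_{y}$ excludes only the single flow line $\pi^{-1}(b_{y})$, which is Lebesgue-null in $\mathbb{T}^{3}$. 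The identical chain with $\liminf$'s gives $\underline R^{\Phi}(x,y)\ge\underline R^{T}(\pi(x),b_{y})$. Thus, for every $y$ and a.e. $x$, the indicators of $\Phi$ at $(x,y)$ dominate those of $T$ at $(\pi(x),b_{y})$.

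It remains to trade $(\pi(x),b_{y})$ for the free pair $(a,b)$. Two observations suffice. First, $T$ is an ergodic isometry of $\mathbb{T}^{2}$, so $\overline R^{T}(a,b)$ (and $\underline R^{T}$) equals a constant for Lebesgue-a.e. $(a,b)$; moreover translations of $\mathbb{T}^{2}$ commute with $T$ and are isometries, so Proposition \ref{inizz} forces $\overline R^{T}(a,b)$ to depend on $a-b$ only, whence this constant is the \emph{same} for every target and $\overline R^{T}(a,b_{y})$ equals it for a.e. $a$, for each fixed $y$. Second, along the linear flow $\pi$ is, up to dropping a coordinate, a measure-preserving shear of $\mathbb{T}^{3}$, so $\pi_{*}\mathrm{Leb}_{\mathbb{T}^{3}}=\mathrm{Leb}_{\mathbb{T}^{2}}$; hence for a.e. $x$ the point $\pi(x)$ falls in the full-measure set on which $\overline R^{T}(\cdot,b_{y})$ takes its constant value. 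Combining, $\overline R^{\Phi}(x,y)\ge\overline R^{T}(a,b)$ and $\underline R^{\Phi}(x,y)\ge\underline R^{T}(a,b)$ for a.e. $(a,x)\in\mathbb{T}^{2}\times\mathbb{T}^{3}$ and every $y,b$. The only point that needs care is this last step: reconciling the various null sets and, in particular, establishing that the hitting time indicator of the torus translation does not depend on the target point, so that the bound holds for \emph{every} $y$ and $b$; everything else is a direct application of Propositions \ref{map1}, \ref{reparametriz} and \ref{sect}.
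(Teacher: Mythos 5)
Your proposal is correct and follows essentially the same route as the paper: the chain Proposition \ref{map1} $\to$ Proposition \ref{reparametriz} $\to$ Proposition \ref{sect} to get $\tau_r^{\Phi}(x,y)\geq C\tau_{Kr}^{T}(\pi(x),y)$, combined with the a.e.\ constancy of $\overline{R}^{T}$ and $\underline{R}^{T}$ (ergodicity plus Proposition \ref{inizz}) and the fact that $\pi$ pulls full-measure sets of the section back to full-measure sets of $\mathbb{T}^{3}$. The coordinate permutation putting the speed-$1$ direction last is cosmetic and does not change the argument.
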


\begin{proof}
Since $T_{\mathbb{T}^{2}}^{(\alpha ,\alpha ^{\prime })}$ is ergodic,
and $\mu (b)=0$, then $\overline{R}^{T}(a,b)$ is almost everywhere constant when $a$ varies. This
constant moreover is obviously the same for each $b$ (see proposition \ref{inizz}). We will show that $\overline{R}^{\Phi }(x,y)$ is a.e. greater than
this constant when $x$ varies. The same can be done for $\underline{R}^{\Phi
}(x,y)$.

By the assumption, $\Phi _{1}^{(\phi ,\mathbf{\alpha })}$ is a time 1 map of
a flow on  $\mathbb{T}^{3}$ which is the reparametrization of a
translation flow $\Phi _{t}$ with direction $\mathbf{\alpha }=(1,\alpha ,\alpha ^{\prime })$.
 
If $y=(y_{1},y_{2},y_{3})\in \mathbb{T}^{3}$, we can consider the Poincar\'{e}   section $T_{\mathbb{T}^{2}}^{(\alpha ,\alpha ^{\prime })}$  of this
flow on a $2$-torus $\mathbb{T}^{2}$ which is the set of points in $\mathbb{T}%
^{3}$ whose first coordinate is $y_{1}$ (this torus hence contains $y$).
Then, combining Propositions \ref{map1}, \ref{sect} and \ref{reparametriz}
there is a constant $K,C>0$ such that for any $x,y\in \mathbb{T}^{3}$, when $r$
is small%
\begin{equation*}
\tau _{r}^{\Phi _{1}^{(\phi ,\mathbf{\alpha })}}(x,y)\geq C\tau _{Kr}^{T_{{%
\mathbb{T}}^{2}}^{(\alpha ,\alpha ^{\prime })}}(\pi (x),y)
\end{equation*}%
where $\pi (x)$ is the projection defined before Proposition \ref{sect}. The
statement follows, because $\pi ^{-1}(A-\{y\})$ is a full measure subset of $%
\mathbb{T}^{3}$ when $A$ is a full measure set in $\mathbb{T}^{2}$.
\end{proof}

In Proposition \ref{reparametriz} we have seen that reparametrizing a flow,
does not change the hitting time indicators. On the other side a flow
reparametrizations can have surprisingly different ergodic properties
compared to the original flow. A reparametrization of a translation can be
mixing as we will see below.

Let $Y$ be the set of couples $(\alpha _{1},\alpha _{2})\in \mathbb{R}^{2}-%
\mathbb{Q}^{2}$ such that the respective best approximant denomiators $%
q_{n},q_{n}^{\prime }$ satisfies definitively 
\begin{equation*}
q_{n}^{\prime }\geq e^{3q_{n}}
\end{equation*}
\begin{equation*}
q_{n+1}\geq e^{3q_{n}^{\prime }}.
\end{equation*}%
Note that $Y$ is an uncountable and dense set and $Y \subset Y_\infty$.

The main result of \cite{fayad} states:

\begin{theorem}
(Fayad's example) \label{fayadex}There is a stricly positive analytic
function $\phi $ on $\mathbb{T}^{3},$ such that for any $(\alpha _{1},\alpha
_{2})\in Y$ the reparametrization with speed $\frac{1}{\phi }$ of the
irrational flow given by the vector $(\alpha _{1},\alpha _{2},1)$ is mixing.
\end{theorem}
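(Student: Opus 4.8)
The plan is to realize the reparametrized flow as a special flow (suspension) over a $2$-torus rotation and to extract mixing from the ``shearing'' of the Birkhoff sums of its roof function.

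\textbf{Reduction and choice of $\phi$.} Cutting the flow of direction $(\alpha_1,\alpha_2,1)$ by the hypersurface $\{x_3=0\}\cong\mathbb T^2$ exhibits the reparametrization with speed $1/\phi$ as the special flow over the rotation $R=R_{(\alpha_1,\alpha_2)}$ of $\mathbb T^2$ with roof $f_\alpha(y)=\int_0^1\phi(y_1+u\alpha_1,\,y_2+u\alpha_2,\,u)\,du$: reparametrizing moves neither the orbits nor the section hit-points, so the first-return map is still $R$ (preserving Lebesgue), only the return times change; the reparametrized flow is isomorphic to this special flow, hence has the same mixing properties. I would then take $\phi$ of the form $1+\sum_{m\ge1}\sum_{|k|\le m}\varepsilon_{m,k}\cos(2\pi(mx_1+kx_3))+\sum_{m\ge1}\sum_{|k|\le m}\varepsilon_{m,k}\cos(2\pi(mx_2+kx_3))$ with $\varepsilon_{m,k}\asymp e^{-(m+|k|)}$ and a small global factor ensuring $\phi>0$; since these coefficients decay exponentially in $|(m,0,k)|$, $\phi$ is analytic, and then $f_\alpha$ is analytic and positive. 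The point of the ``comb'' in the $x_3$ direction is that, for any $\alpha_1$ and any convergent denominator $q_n$, integrating in $u$ turns the single term with $k=-p_n$ (the best-approximation integer, $|p_n|\le q_n$) into a roof coefficient $\widehat{f_\alpha}(q_n,0)$ of size $\asymp\varepsilon_{q_n,-p_n}\asymp e^{-c q_n}$, $c\in[1,2]$ --- here one uses $\|q_n\alpha_1\|\asymp1/q_{n+1}$ (Proposition \ref{prop:bounds}), which makes the $k=-p_n$ contribution dominate the $u$-integral, the other $k$'s and a choice of signs preventing cancellation. Likewise $\widehat{f_\alpha}(0,q_n')\asymp e^{-c'q_n'}$. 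Thus a single analytic $\phi$ serves every $(\alpha_1,\alpha_2)\in Y$; only the roofs $f_\alpha$ change.

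\textbf{Stretching of Birkhoff sums.} The core step is that $y\mapsto S_Nf_\alpha(y)=\sum_{i<N}f_\alpha(R^iy)$ ``spreads'': its oscillation over balls of fixed radius tends to $\infty$ along every sequence $N\to\infty$. Summing the resonant frequency geometrically, $\sum_{i<N}\widehat{f_\alpha}(q_n,0)e^{2\pi iq_n(y_1+i\alpha_1)}=\widehat{f_\alpha}(q_n,0)e^{2\pi iq_ny_1}\frac{e^{2\pi iq_n\alpha_1N}-1}{e^{2\pi iq_n\alpha_1}-1}$, which for $1\ll N\lesssim q_{n+1}$ (again $\|q_n\alpha_1\|\asymp1/q_{n+1}$) is $\approx N\,\widehat{f_\alpha}(q_n,0)\,e^{2\pi iq_ny_1}$: a term of amplitude $\asymp Ne^{-cq_n}$ oscillating in $y_1$ with frequency $q_n$, hence with $y$-gradient $\asymp q_nNe^{-cq_n}$. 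Its peak, near $N\asymp q_{n+1}$, has size $\asymp q_{n+1}e^{-cq_n}$, and \emph{this is exactly where the arithmetic of $Y$ enters}: $q_{n+1}\ge e^{3q_n'}\ge e^{3e^{3q_n}}$ forces $q_{n+1}e^{-cq_n}\to\infty$, beating the exponential decay of the comb. A counting argument --- the intervals of $N$ on which the $q_n$-, resp.\ the $q_n'$-, resonant term governs the oscillation of $S_Nf_\alpha$ cover a neighbourhood of infinity (using the wide separation of the scales $q_n,q_n'$ built into $Y$, as in the proof of Theorem \ref{thm:infliminf}), while at the ``handoff'' between two consecutive resonances both terms are large but their frequencies so incommensurate that the sum still oscillates rapidly --- yields: along every sequence $N\to\infty$, $y\mapsto S_Nf_\alpha(y)$ has a component of amplitude and $y$-gradient tending to $\infty$.

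\textbf{From stretching to mixing.} On the representation $\{(y,s):0\le s<f_\alpha(y)\}$ it suffices to test correlations on the dense family $F=\chi(y)e^{2\pi i\ell s}$, $\chi$ a trigonometric polynomial. Writing $T^{f_\alpha}_t(y,s)=(R^{N}y,\,s+t-S_Nf_\alpha(y))$, $N=N(y,s,t)$ the number of roof-crossings before time $t$, the integral $\int F\cdot\overline{G\circ T^{f_\alpha}_t}$ carries the factor $e^{2\pi i\ell'S_Nf_\alpha(y)}$ alongside $\overline{\psi(R^{N}y)}$. If $\ell'\ne0$, the stretching estimate together with van der Corput / stationary phase in $y_1$ (or $y_2$) sends this integral to $0$, uniformly for large $t$. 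If $\ell'=0$, integrating in $s$ fixes the fibre and leaves an average of $\psi\circ R^{N(y,s,t)}$ over $y$; since $S_Nf_\alpha$ fluctuates (for $N\approx t$) by an amount $\to\infty$, the integer $N(y,s,t)$ runs, as $y$ varies, over a window of consecutive integers of length $\to\infty$, so the $y$-average becomes an average of $\psi\circ R^N$ over a growing window of $N$'s, which converges to $\int\psi$ by unique ergodicity of $R$ (equivalently by the Weyl/Bessel estimate $\int e^{2\pi iN\theta}\,d\nu(N)\to0$ when $\theta=k_1\alpha_1+k_2\alpha_2\notin\mathbb Z$ and $\nu$ is spread over a long interval of integers). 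Hence $\int F\cdot\overline{G\circ T^{f_\alpha}_t}\to\int F\int\overline G$, i.e.\ the flow is mixing.

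\textbf{Main obstacle.} The delicate part is the quantitative uniformity of the stretching step: one must maintain the ``large-amplitude, highly oscillatory, large-gradient'' picture for \emph{all} large $N$ --- including the transition ranges between the windows of consecutive resonances --- with every lower-order and error term controlled uniformly in $t$, and do so while keeping $\phi$ at once analytic and strictly positive and keeping the induced roof $f_\alpha$ with sizeable Fourier mass exactly at the (a priori unknown) convergent denominators of every $\alpha\in Y$. The tension between ``$\varepsilon_{m,k}$ small enough for analyticity of $\phi$'' and ``$q_{n+1}\varepsilon_{q_n,-p_n}$ large enough for stretching'' is precisely what the super-exponential growth in the definition of $Y$ (the constant $3$ in $e^{3q_n}$) is there to resolve; the bookkeeping of the non-smooth special-flow domain and of the crossing count $N(y,s,t)$ is a further but more routine technical layer.
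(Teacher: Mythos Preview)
The paper does not prove this theorem. It is stated as ``The main result of \cite{fayad}'' and simply quoted; there is no proof in the paper to compare your attempt against. Your sketch is a plausible outline of Fayad's original argument (special-flow representation over the $2$-torus rotation, control of the Birkhoff sums of the roof via the resonant Fourier coefficients at the convergent denominators, and an oscillatory-integral/equidistribution argument to deduce mixing), but as far as this paper is concerned the statement is used as a black box, so no proof is expected here.
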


We remark that the particular choice of numbers $\alpha _{1}$ and $\alpha
_{2}$ given by the above equations is important. It is proved (\cite{fayad},
Thm. 2) that for a generic choice of $\alpha _{1}$ and $\alpha _{2}$ we
cannot have a similar result.
By Proposition \ref{DUE} and Theorem \ref{thm:infliminf} the time-1 map of
this system has infinite hitting time indicator. This will provide an
example of a smooth mixing system with no relation between hitting time and measure
(in other words a smooth, mixing system with no logarithm law).

\begin{corollary}
\label{thm:fydconn} 
Let $(\mathbb{T}^{3},\Phi _{1}^{\phi ,\alpha })$ be the time $1$ map of the
system described in Theorem \ref{fayadex} equipped with its natural
absolutely continuous invariant measure $\mu $. This system is mixing and we
have that for each $x$, $d_{\mu }(x)=3,$ while%
\begin{equation*}
\overline{R}(y,x)=\underline{R}(y,x)=\infty
\end{equation*}%
for almost each $y.$
\end{corollary}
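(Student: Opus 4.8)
The plan is to assemble Corollary \ref{thm:fydconn} by combining three ingredients already in place: Fayad's mixing result (Theorem \ref{fayadex}), the arithmetical lower bound for torus translations (Theorem \ref{thm:infliminf}), and the propositions about hitting time under reparametrization, time-$1$ map, and Poincar\'e sections (Propositions \ref{map1}, \ref{sect}, \ref{reparametriz}, packaged in Proposition \ref{DUE}). First I would fix a pair $(\alpha_1,\alpha_2)\in Y$; since $Y\subset Y_\infty$, the corresponding torus translation $T_{(\alpha_1,\alpha_2)}$ on $\mathbb{T}^2$ satisfies the hypotheses of Theorem \ref{thm:infliminf} for \emph{every} $\gamma$, hence $\underline{R}^{T}(a,b)=\infty$ for almost every $a$ and every $b$. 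By Theorem \ref{fayadex} the reparametrization with speed $1/\phi$ of the flow in direction $(\alpha_1,\alpha_2,1)$ is mixing, so its time-$1$ map $\Phi_1^{\phi,\alpha}$ is mixing as well; mixing is inherited by the time-$1$ map of a mixing flow. This gives the first assertion of the corollary.

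Next I would address the statement $d_\mu(x)=3$ for every $x$. The invariant measure $\mu$ of the reparametrized flow is absolutely continuous with respect to Lebesgue on $\mathbb{T}^3$ with density $1/\phi(x)$, which is analytic, strictly positive, hence bounded above and below by positive constants on the compact torus $\mathbb{T}^3$. Therefore $\mu(B_r(x))$ is comparable to $r^3$ uniformly in $x$, and $\lim_{r\to 0}\frac{\log\mu(B_r(x))}{\log r}=3$ for every $x$; this is immediate and I would state it in one line.

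For the hitting time equality I would invoke Proposition \ref{DUE} with the translation $T=T^{(\alpha_1,\alpha_2)}_{\mathbb{T}^2}$: it yields $\underline{R}^{\Phi_1^{\phi,\alpha}}(x,y)\geq \underline{R}^{T}(a,b)$ for almost every $(a,x)$ in $\mathbb{T}^2\times\mathbb{T}^3$, and since the right-hand side equals $\infty$ for a.e.\ $a$, Fubini forces $\underline{R}^{\Phi_1^{\phi,\alpha}}(x,y)=\infty$ for a.e.\ $x$; then $\overline{R}\geq\underline{R}=\infty$ as well. One must check that Proposition \ref{DUE} genuinely applies: the direction vector there is written $(1,\alpha,\alpha')$ whereas Fayad's is $(\alpha_1,\alpha_2,1)$ — this is a harmless relabelling of coordinates (take the Poincar\'e section on $x_3=\text{const}$ instead of $x_1=\text{const}$), but I would note it so the reader is not confused, and confirm that $(\alpha_1,\alpha_2)$ are rationally independent with $1$, which holds since the flow $(\alpha_1,\alpha_2,1)$ is irrational. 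A subtlety worth a sentence: Proposition \ref{DUE} gives the conclusion for the time-$1$ map of the reparametrized flow and for a.e.\ base-point $x$ (not literally every $y$), which is exactly what the corollary claims.

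The main obstacle is not any single hard estimate — everything reduces to already-proved statements — but rather bookkeeping: making sure that the almost-everywhere quantifiers line up correctly through the chain Proposition \ref{map1} $\to$ Proposition \ref{sect} $\to$ Proposition \ref{reparametriz} (compressed into Proposition \ref{DUE}), that the coordinate relabelling between Fayad's convention and ours is legitimate, and that the hypotheses of Theorem \ref{thm:infliminf} are met for the specific Liouville pair in $Y$. Since $Y\subset Y_\infty=\bigcap_\gamma Y_\gamma$, the lower bound $\underline{R}\geq\gamma$ holds for every $\gamma$, giving $\underline{R}=\infty$; this is the one place where the precise inclusion $Y\subset Y_\infty$ is used, so I would make that inclusion explicit. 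Once these compatibility checks are made, the proof is a two-line citation of Proposition \ref{DUE} and Theorem \ref{thm:infliminf} together with the elementary density computation for $d_\mu$.
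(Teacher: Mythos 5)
Your proposal is correct and follows essentially the same route as the paper's own proof: cite Proposition \ref{DUE} to transfer the lower bound from the translation $T^{(\alpha,\alpha')}_{\mathbb{T}^2}$ to the time-$1$ map, and use $Y\subset Y_\infty$ together with Theorem \ref{thm:infliminf} to conclude that the indicator is infinite. Your extra remarks (coordinate relabelling, the one-line density computation for $d_\mu=3$, mixing of the time-$1$ map of a mixing flow) are sound bookkeeping that the paper leaves implicit.
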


\begin{proof}
This map is the time-1 map of a strictly positive speed reparametrization of
a translation flow $(\mathbb{T}^{3},\Phi _{t})$ with angle $\mathbf{\alpha }=(1,\alpha ,\alpha ^{\prime })$ and $(\alpha ,\alpha
^{\prime })\in Y\subset Y_{\infty }$. By proposition \ref{DUE} its hitting
time indicator is greater or equal than the one given by the translation $T_{%
{\mathbb{T}}^{2}}^{(\alpha ,\alpha ^{\prime })}$ on $\mathbb{T}^{2}$ given
by the angles $(\alpha ,\alpha ^{\prime })$ and this, by Theorem \ref%
{thm:infliminf} are infinite.
\end{proof}

\section{Decay of correlations and hitting time}

\label{lastsection}

It is well known that many kinds of chaotic dynamics ''mixes'' the
phase space. Decay of correlation speed gives a quantitative
estimation for the speed of this mixing behavior. We recall the definition.

\begin{definition}
\label{sup} A system $(X,T,\mu )$, where $T$ is $\mu$-invariant,
is said to have decay of correlations with
power law speed with exponent $p$ if there exists $\Phi(n)$ such that, for
all Lipschitz observables $\phi ,$ $\psi :X\rightarrow \mathbb{R}$ on $X$ 
\begin{equation*}
|\int \phi \circ T^{n}\psi d\mu -\int \phi d\mu \int \psi d\mu |\leq
\left\vert \left\vert \phi \right\vert \right\vert \left\vert \left\vert
\psi \right\vert \right\vert \Phi (n).
\end{equation*}%
and $\lim_{n\rightarrow \infty }\frac{-\log \Phi (n)}{\log n}=p.$ Here $%
||~|| $ is the Lipschitz norm\footnote{
This is a definition for Lipschitz observables which is one of the
weakest possible statements. In many systems decay of correlation is
proved for observables in other functional spaces with weaker norms (Holder functions e.g.). \ In
this cases the decay of correlation for Lipschitz observables follows by
estimating the weaker norms by the Lipschitz one.}.
\end{definition}

We remark that by the above definition, decay of correlations slower than
any negative power law (logarithmic decay for example) are considered as
power laws with zero exponent.

\begin{lemma}
\label{uno} Let $A_{n}=T^{-n}(B_{r}(x_{0}))$. If $(X,T,\mu )$ is a system
satisfying definition \ref{sup} then for each small $\lambda >0$%
\begin{equation}
\mu (A_{k}\cap A_{j})\leq \mu (B_{(\lambda +1)r}(x_{0}))^{2}+\frac{4\Phi (k-j)%
}{\lambda^{2}r^{2}}  \label{ball}
\end{equation}
holds eventually as $r\to 0 $.
\end{lemma}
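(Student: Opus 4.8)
The plan is to bound the indicator of the ball above by a Lipschitz bump function with controlled norm, and then feed that bump function into Definition \ref{sup} with both observable slots occupied by it.

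First I would fix $\lambda>0$ and, for each small $r$, build a Lipschitz function $\phi=\phi_{r,\lambda}\colon X\to[0,1]$ that equals $1$ on $B_r(x_0)$, equals $0$ off $B_{(1+\lambda)r}(x_0)$, and has Lipschitz constant at most $\tfrac{1}{\lambda r}$; for instance $\phi(x)=\max\!\bigl(0,\,1-\tfrac{1}{\lambda r}\,\mathrm{dist}(x,B_r(x_0))\bigr)$ works, since $x\mapsto\mathrm{dist}(x,B_r(x_0))$ is $1$-Lipschitz. Then $\mathbf 1_{B_r(x_0)}\le\phi\le\mathbf 1_{B_{(1+\lambda)r}(x_0)}$, so $\int\phi\,d\mu\le\mu(B_{(1+\lambda)r}(x_0))$, while the Lipschitz norm satisfies $\|\phi\|=\|\phi\|_\infty+\mathrm{Lip}(\phi)\le 1+\tfrac{1}{\lambda r}\le\tfrac{2}{\lambda r}$ as soon as $r$ is small enough that $\lambda r\le 1$; this mild restriction is exactly why the conclusion is only asserted ``eventually as $r\to 0$''.

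Next I would compute, assuming without loss of generality $k\ge j$ and using that $\mu$ is $T$-invariant (so that pulling everything back by $T^{j}$ costs nothing), that
\[
\mu(A_k\cap A_j)=\int \bigl(\mathbf 1_{B_r(x_0)}\!\circ T^{k}\bigr)\bigl(\mathbf 1_{B_r(x_0)}\!\circ T^{j}\bigr)\,d\mu
=\int \bigl(\mathbf 1_{B_r(x_0)}\!\circ T^{\,k-j}\bigr)\,\mathbf 1_{B_r(x_0)}\,d\mu
\le \int \bigl(\phi\circ T^{\,k-j}\bigr)\,\phi\,d\mu .
\]
Now apply Definition \ref{sup} with $\phi=\psi$ and $n=k-j$:
\[
\int \bigl(\phi\circ T^{\,k-j}\bigr)\,\phi\,d\mu \le \Bigl(\int\phi\,d\mu\Bigr)^{2}+\|\phi\|^{2}\,\Phi(k-j)
\le \mu\bigl(B_{(1+\lambda)r}(x_0)\bigr)^{2}+\frac{4\,\Phi(k-j)}{\lambda^{2}r^{2}},
\]
which is precisely inequality (\ref{ball}).

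I expect no real obstacle here: the whole content is the standard device of sandwiching the non-Lipschitz ball indicator between Lipschitz functions whose norms are under control, so that it can be legitimately inserted into the decay-of-correlations estimate. The only point to keep an eye on is the quadratic blow-up — the Lipschitz constant $(\lambda r)^{-1}$ enters squared via $\|\phi\|^{2}$, which is what produces the $r^{-2}$ factor in the error term — and no smoother mollification within the Lipschitz class removes it.
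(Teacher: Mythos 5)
Your proof is correct and follows essentially the same route as the paper's: the paper also constructs a Lipschitz bump of norm at most $\tfrac{2}{\lambda r}$ equal to $1$ on $B_r(x_0)$ and vanishing off $B_{(1+\lambda)r}(x_0)$, uses invariance of $\mu$ to reduce $\mu(A_k\cap A_j)$ to a correlation integral at time $k-j$, and applies Definition \ref{sup}. Your write-up merely supplies the explicit formula for the bump and the norm bookkeeping that the paper leaves implicit.
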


\begin{proof}
Let $\phi _{\lambda }$ be a Lipschitz function with norm less than $\frac{2}{\lambda r}
$such that $\phi _{\lambda }(x)=1$ for all $x\in B_{r}(x_{0})$, $\phi
_{\lambda }(x)=0$ if $x\notin B_{(1+\lambda )r}(x_{0})$, the result follows
directly by definition \ref{sup}, by the remarking that by invariance of $%
\mu $ 
\begin{equation}
\mu (A_{k}\cap A_{j})\leq \int \phi _{r}\circ T^{k-j}\phi _{r}d\mu .
\end{equation}
\end{proof}

We now prove that in systems with at least polynomial decay of correlations
and "good" measure the hitting time in small balls is related to the inverse
of the ball measure up to an error which is controlled by decay of
correlation speed.

\begin{theorem}\label{1}
Let $(X,T,\mu)$ be a (probability)  measure preseving transformation
 on a metric space.  Suppose that for the point $x_0$ we have $\mu (\{x_0\})=0$, $0<\underline{d}_{\mu }(x_{0})\leq \overline{d}_{\mu }(x_{0})<\infty $ and the
measure $\mu $ satisfies the following property\footnote{
Which roughly means that the measure goes polynomially to zero for every
annulus in a neighbourhood of $x_0$; this is due to the technical
requirement that we assume decay of correlations of observables but we want
to use it for events (see Lemma \ref{uno}).}: there exist $C,\beta >0$ such
that for each $r,\lambda >0$ small enough 
\begin{equation}
\mu (B_{(1+\lambda )r}(x))\leq \mu (B_{r}(x))(1+C\lambda ^{\beta }).
\label{doub}
\end{equation}
If the system has polynomial decay of correlation with exponent $p$, then 
\begin{equation*}
\underset{r\rightarrow 0}{\lim \sup }\frac{\log \tau _{r}(x,x_{0})}{-\log
\mu (B_{r}(x_{0}))}\leq 1+\frac{2\overline{d}_{\mu }(x_{0})+2}{\underline{d}%
_{\mu }(x_{0})p}
\end{equation*}
holds for $\mu $-almost each $x$.
\end{theorem}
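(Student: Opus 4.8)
The plan is to bound the hitting time from above using a Borel--Cantelli-type argument applied to a cleverly chosen sequence of radii $r_k\to 0$, where the ``bad'' event at step $k$ is that the orbit of $x$ fails to enter $B_{r_k}(x_0)$ within a prescribed (polynomially large) number of steps $N_k$. First I would fix $r_k = k^{-a}$ for a suitable exponent $a>0$ and set $N_k = \lfloor \mu(B_{r_k}(x_0))^{-s}\rfloor$ for an exponent $s>1$ to be optimized at the end; the target inequality says exactly that $s$ can be taken arbitrarily close to $1+\frac{2\overline d_\mu(x_0)+2}{\underline d_\mu(x_0)p}$. The event ``$\tau_{r_k}(x,x_0)>N_k$'' is the event that $x\notin \bigcup_{j=1}^{N_k}A_j$ where $A_j=T^{-j}(B_{r_k}(x_0))$. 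Since $\mu(A_j)=\mu(B_{r_k}(x_0))$, the first moment $\sum_{j=1}^{N_k}\mu(A_j)$ is about $N_k\,\mu(B_{r_k}(x_0))=\mu(B_{r_k}(x_0))^{1-s}$, which is large; the point is to control the second moment and apply a Paley--Zygmund / Chebyshev estimate.

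The key step is the second-moment bound, and this is where Lemma \ref{uno} enters. Writing $S=\sum_{j=1}^{N_k}\mathbf 1_{A_j}$, we have $\mu\{\tau_{r_k}>N_k\}=\mu\{S=0\}\le \frac{\mathrm{Var}(S)}{(\int S\,d\mu)^2}$, and $\mathrm{Var}(S)\le \int S\,d\mu + 2\sum_{1\le j<k'\le N_k}\bigl(\mu(A_{k'}\cap A_j)-\mu(B_{r_k}(x_0))^2\bigr)$. By Lemma \ref{uno}, each correlation term $\mu(A_{k'}\cap A_j)-\mu(B_{r_k}(x_0))^2$ is bounded (using the doubling-type hypothesis \eqref{doub} to replace $\mu(B_{(1+\lambda)r}(x_0))^2$ by $\mu(B_r(x_0))^2$ up to a factor $1+C\lambda^\beta$, then choosing $\lambda=\lambda_k\to 0$ slowly) by $C\lambda_k^\beta\,\mu(B_{r_k}(x_0))^2 + \frac{4\Phi(k'-j)}{\lambda_k^2 r_k^2}$. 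Summing over pairs, the first piece contributes $O(\lambda_k^\beta N_k^2\mu(B_{r_k})^2)=o(N_k^2\mu(B_{r_k})^2)$ and is negligible against $(\int S\,d\mu)^2=N_k^2\mu(B_{r_k})^2$; the second piece contributes $\frac{C}{\lambda_k^2 r_k^2}\sum_{1\le j<k'\le N_k}\Phi(k'-j)\le \frac{C N_k}{\lambda_k^2 r_k^2}\sum_{m\ge 1}\Phi(m)$ if $p>1$, or more crudely $\le \frac{CN_k^2}{\lambda_k^2 r_k^2}\Phi(1)$-type bounds; since $\Phi(m)\sim m^{-p}$ one uses $\sum_{m=1}^{N_k}\Phi(m)\lesssim N_k^{\max(0,1-p)+\epsilon}$, so the correlation contribution is at most a constant times $\frac{N_k^{2-\min(1,p)+\epsilon}}{\lambda_k^2 r_k^2}$.

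Assembling, $\mu\{\tau_{r_k}>N_k\}\lesssim \frac{1}{N_k\mu(B_{r_k})^2}+\lambda_k^\beta + \frac{N_k^{-\min(1,p)+\epsilon}}{\lambda_k^2 r_k^2\,\mu(B_{r_k})^2}$. Using $\mu(B_{r_k})$ comprised between $r_k^{\overline d_\mu+\epsilon}$ and $r_k^{\underline d_\mu-\epsilon}$ eventually, and $N_k\approx \mu(B_{r_k})^{-s}\ge r_k^{-s(\underline d_\mu-\epsilon)}$, one checks that the dominant term is the last one, of order roughly $r_k^{s\,\underline d_\mu\,\min(1,p) - 2 - 2\overline d_\mu - \epsilon'}$ (the $\lambda_k$ factors and the $\epsilon$'s absorbed), and this is summable in $k$ precisely when $s\,\underline d_\mu\,p > 2\overline d_\mu+2$, i.e.\ $s>1+\frac{2\overline d_\mu+2-\underline d_\mu p}{\underline d_\mu p}$ — one must be slightly careful to see this matches $s>\frac{2\overline d_\mu+2}{\underline d_\mu p}$ when that exceeds $1$, and that the first two error terms are summable under the same constraint after choosing $\lambda_k=k^{-\delta}$ and $a$ large. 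Then Borel--Cantelli gives, for a.e.\ $x$, $\tau_{r_k}(x,x_0)\le N_k$ eventually; by Lemma \ref{lemmino} (the radii $r_k=k^{-a}$ satisfy $r_{k+1}>cr_k$ eventually) this transfers to the continuous $\limsup$, yielding $\limsup_{r\to 0}\frac{\log\tau_r(x,x_0)}{-\log\mu(B_r(x_0))}\le s$, and letting $s$ decrease to $1+\frac{2\overline d_\mu(x_0)+2}{\underline d_\mu(x_0)p}$ (noting this is $\ge 1$, so no contradiction with the trivial lower bound) finishes the proof. The main obstacle is the bookkeeping in this last optimization: one must choose the three free parameters $a$ (radius exponent), $\delta$ (rate of $\lambda_k\to 0$), and $\epsilon$ jointly so that all three error terms are simultaneously summable under a constraint on $s$ that is no stronger than the claimed one, and handle the case $p\le 1$ where $\sum\Phi(m)$ diverges and the cruder bound $\sum_{m\le N_k}\Phi(m)\lesssim N_k^{1-p+\epsilon}$ must be used.
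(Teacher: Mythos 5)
Your overall architecture (second moment of $S=\sum_{j\le N_k}\mathbf 1_{T^{-j}B_{r_k}(x_0)}$, Lemma \ref{uno} with $\lambda_k\to 0$ slowly, Chebyshev plus Borel--Cantelli along a lacunary sequence of radii, then Lemma \ref{lemmino}) is exactly the paper's. But there is a genuine gap in the second-moment estimate: you apply the correlation bound of Lemma \ref{uno} to \emph{every} off-diagonal pair and then sum $\sum_{m=1}^{N_k}\Phi(m)$. For pairs with small gap $m$ the quantity $\Phi(m)/(\lambda_k^2 r_k^2)$ is of order $r_k^{-2}$, vastly larger than the trivial bound $\mu(A_{k'}\cap A_j)\le\mu(B_{r_k})$, and these near-diagonal pairs alone force $N_k\gtrsim (\lambda_k r_k\,\mu(B_{r_k}))^{-2}$, i.e.\ $s\ge \frac{2\overline d_\mu+2}{\underline d_\mu}$ \emph{independently of} $p$. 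This is visible in your own display: the exponent you obtain contains $\min(1,p)$, yet the summability condition you then assert contains $p$; these agree only for $p\le 1$. For $p>1$ your route cannot reach the claimed bound $1+\frac{2\overline d_\mu+2}{\underline d_\mu p}$, which tends to $1$ as $p\to\infty$ --- precisely the regime the theorem is about.

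The missing idea (which is how the paper proceeds, with $r_n=e^{-n}$ and $N_n=e^{\zeta n}\lfloor\mu(B_{r_n}(x_0))^{-1}\rfloor$) is to split the off-diagonal sum at a gap threshold $e^{\alpha n}$ with $\frac{2\overline d_\mu+2}{p}<\alpha<\zeta$: pairs with gap $<e^{\alpha n}$ are bounded trivially by $\mu(B_{r_n})$, contributing $e^{\alpha n}\mathbf E(S)=o\bigl((\mathbf E(S))^2\bigr)$ since $\alpha<\zeta$; pairs with gap $\ge e^{\alpha n}$ all satisfy $\Phi(k-j)\le\Phi(e^{\alpha n})\lesssim e^{-(p-\epsilon)\alpha n}$, so their total correlation contribution $\frac{4N_n^2\Phi(e^{\alpha n})}{\lambda_n^2 r_n^2}$, divided by $(\mathbf E(S))^2=N_n^2\mu(B_{r_n})^2$, is $\lesssim n^{4/\beta}e^{-n(p\alpha-2\overline d_\mu-2)}$, summable because $p\alpha>2\overline d_\mu+2$. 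Only this splitting converts fast correlation decay into a correspondingly small excess exponent $\zeta>\frac{2\overline d_\mu+2}{p}$, hence the stated bound. With that modification (and your bookkeeping for the doubling term, which is fine), the proof goes through.
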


\begin{proof}
(of Theorem C) Lower bound follows
from Proposition \ref{GAN}.  An absolutely continuous invariant measure on a manifold of
dimension $d$ having continuous and strictly positive density satisfies the
hypotheses \ref{doub} on the measure at every point. In addition it is exact
dimensional with $\underline{d}_\mu=\overline{d}_\mu=d$. Upper bound follows from Theorem \ref{1} (cfr.
also Remark \ref{rem:tausumu}).
\end{proof}

\begin{proof}
(of Theorem \ref{1}) Since we consider a ratio of logarithms and $\overline{d%
}_{\mu }(x_{0})<\infty $, the measure $\mu (B_{e^{-n}}(x_{0}))$ will
decrease at most exponentially fast, and without loss of generality we can
restrict to a sequence of radii $r_{n}=e^{-n}$ (see Lemma \ref{lemmino}).

Set $\zeta=\frac{2\overline{d}_{\mu }(x_{0})+2}{p}+\epsilon$, $\epsilon >0$
and 
\begin{equation*}
A_{n}=\underset{1\leq i\leq e^{\zeta n}\left\lfloor \mu
(B_{r_{n}}(x_{0}))^{-1}\right\rfloor }{\cup }T^{-i}(B_{r_{n}}(x_{0})).
\end{equation*}%
To prove our thesis it is sufficient to prove the following statement: for
every $\zeta>\frac{2\overline{d}_{\mu }(x_{0})+2}{p}$, $\mu$-almost every $x
\in X$ belongs eventually to $A_{n}$.
Indeed we have definitively $\tau _{r_n}(x,x_{0})\leq e^{\zeta
n}\left\lfloor \mu (B_{r_{n}}(x_{0}))^{-1}\right\rfloor $, then 
\begin{equation*}
\frac{\log (\tau _{r_{n}}(x,x_{0}))}{-\log \mu (B_{r_{n}}(x_{0}))}\leq \frac{%
\zeta n+\log \left\lfloor \mu (B_{r_{n}}(x_{0}))^{-1}\right\rfloor }{-\log
\mu (B_{r_{n}}(x_{0}))}\leq \frac{2\overline{d}_{\mu }(x_{0})+2+\epsilon p}{(%
\underline{d}_{\mu }(x_{0})-\epsilon )p}+\frac{\log \left\lfloor \mu
(B_{r_{n}}(x_{0}))^{-1}\right\rfloor }{\log \mu (B_{r_{n}}(x_{0}))^{-1}}.
\end{equation*}
And the thesis follows by letting $n\to \infty$ and $\epsilon \to 0$.

Hence, we set $m_{n}=e^{\zeta n}\left\lfloor \mu
(B_{r_{n}}(x_{0}))^{-1}\right\rfloor$ and consider 
\begin{equation*}
Z_{n}(x)=\sum_{i=1}^{m_{n}}1_{T^{-i}(B_{r_{n}}(x_{0}))}(x).
\end{equation*}%
If $Z_{n}(x)>0$ then $x\in A_{n}$. If we prove that $\frac{Z_{n}(x)}{\mathbf{%
E}(Z_{n})}\rightarrow 1$ almost everywhere for every $\zeta>\frac{2\overline{%
d}_{\mu }(x_{0})+2}{p}$ then the statement is proved.

The idea is to estimate $\mathbf{E}((Z_{n})^{2})$ and find an upper bound
which ensures that the distribution of the possible values of $Z_{n}$ is not
too far from the average $\mathbf{E}(Z_{n})$. To compare $Z_{n}$ with its
average $\mathbf{E}(Z_{n})$ we consider 
\begin{equation*}
Y_{n}=\frac{Z_{n}}{\mathbf{E}(Z_{n})}-1=\frac{Z{_{n}}-\mathbf{E}{(Z_{n})}}{%
\mathbf{E}(Z_{n})}.
\end{equation*}%
When $Y_{n}=0$, $Z_{n}=\mathbf{E}(Z_{n})$, thus we need to prove that $Y_n
\to 0$ almost everywhere.

\bigskip We denote the preimages of the balls as $%
B_{k}=T^{-k}(B_{r_{n}}(x_{0})).$

We have that 
\begin{equation}  \label{eq:ce}
\mathbf{E}((Z_{n})^{2})=\sum_{k=1}^{m_{n}}\mu (B_{k})+2\sum_{k,j\leq
m_{n},k>j}\mu (B_{k}\cap B_{j}).
\end{equation}

The second summand on the right side of (\ref{eq:ce}) can be thought as the
sum of the off-diagonal elements of a matrix with entries $\mu (B_{k}\cap
B_{j})$. We split this sum in two parts by considering separately the
entries `near' or `far' the diagonal. We measure this `nearness' in the
following way: take $\alpha$ such that $\frac{2\overline{d}_{\mu }(x)+2}{p}%
<\alpha <\zeta$ and split the sum as 
\begin{equation*}
\sum_{k,j\leq m_{n},k>j}\mu (B_{k}\cap B_{j}) =
\end{equation*}%
\begin{equation}  \label{eq:splitsum}
= \sum_{k,j\leq m_{n},k>j,k<j+e^{\alpha n}}\mu (B_{k}\cap
B_{j})+\sum_{k,j\leq m_{n},k\geq j+e^{\alpha n}}\mu (B_{k}\cap B_{j}).
\end{equation}

Since $\mu (B_{k}\cap B_{j})\leq \mu (B_{k})$, the first summation in (\ref%
{eq:splitsum}) can be largely estimated as follows: 
\begin{equation}  \label{eq:firstsum}
\sum_{k,j\leq m_{n},k>j,k<j+e^{\alpha n}}\mu (B_{k}\cap B_{j})\leq e^{\alpha
n}\mathbf{E}(Z_{n}).
\end{equation}

To estimate the second summation we use hypothesis on speed of decay of
correlations. From Lemma \ref{uno}, taking $\lambda =n^{-\frac{2}{\beta }}$%
(the value of $\beta $ depends on $\mu $ as given in (\ref{doub})) we have
that 
\begin{equation}
\mu (B_{k}\cap B_{j})\leq \mu (B_{(1+n^{-\frac{2}{\beta }%
})r_{n}}(x_{0}))^{2}+\frac{4\Phi (k-j)}{(n^{-\frac{2}{\beta }})^{2}r_{n}^{2}}.
\label{mixxx}
\end{equation}%
with $\Phi $ as in definition \ref{sup} is polynomially decaying with
exponent $p$. By equation (\ref{doub}) we have that 
\begin{equation}  \label{eq:newlab}
\mu (B_{(1+n^{-\frac{2}{\beta }})r_{n}}(x_{0}))^{2}\leq \mu
(B_{r_{n}}(x_{0}))^{2}(1+Cn^{-2})^{2}.
\end{equation}
Using the bounds in (\ref{mixxx}) and (\ref{eq:newlab})
we obtain%
\begin{equation*}
\sum_{k,j\leq m_{n},k\geq j+e^{\alpha n}}\mu (B_{k}\cap B_{j})\leq
\end{equation*}%
\begin{equation*}
\leq \sum_{k,j\leq m_{n},k\geq j+e^{\alpha n}} \left[ \mu (B_{(1+n^{-\frac{2%
}{\beta }})r_{n}}(x_{0}))^{2}+\frac{4\Phi (k-j)}{(n^{-\frac{2}{\beta }%
})^{2}r_{n}^{2}} \right] \leq
\end{equation*}%
\begin{equation*}
\leq \frac{1}{2}\mathbf{E}((Z_{n}))^{2}(1+Cn^{-2})^{2}+\frac{(m_{n})^{2}}{%
(n^{-\frac{2}{\beta }})^{2}(r_{n})^{2}} 4 \Phi (e^{\alpha n}).
\end{equation*}%
Hence by (\ref{eq:firstsum})
\begin{equation*}
\sum_{k,j\leq m_{n},k>j}\mu (B_{k}\cap B_{j})\leq
\end{equation*}%
\begin{equation}  \label{eq:secsum}
\leq e^{\alpha n}\mathbf{E}(Z_{n})+\frac{1}{2}(\mathbf{E}%
(Z_{n}))^{2}(1+Cn^{-2})^{2}+\frac{4(m_{n})^{2}}{(n^{-\frac{2}{\beta }%
})^{2}(r_{n})^{2}}\Phi (e^{\alpha n})
\end{equation}


Now, plugging in (\ref{eq:firstsum}) and (\ref{eq:secsum}) into (\ref{eq:ce}%
), we obtain 
\begin{eqnarray*}
\mathbf{E}((Z_{n}-\mathbf{E}(Z_{n}))^{2}) & = & \mathbf{E}((Z_{n})^{2})-(%
\mathbf{E}(Z_{n}))^{2} \leq \\
& \leq &(2 e^{\alpha n}+1)\mathbf{E}(Z_{n})+(\mathbf{E}%
(Z_{n}))^{2}(C^{2}n^{-4}+2Cn^{-2})+ \\
& & +\frac{4 (m_{n})^{2}}{(n^{-\frac{2}{\beta }})^{2}(r_{n})^{2}}\Phi
(e^{\alpha n}).
\end{eqnarray*}
which, in terms of $Y_n$, amounts to 
\begin{eqnarray*}
\mathbf{E}((Y_{n})^{2}) &\leq &\frac{(e^{\alpha n+1}+1)\mathbf{E}(Z_{n})+(%
\mathbf{E}(Z_{n}))^{2}(C^{2}n^{-4}+2Cn^{-2})}{(\mathbf{E}(Z_{n}))^{2}}+ \\
&&+\frac{4 (m_{n})^{2}}{(\mathbf{E}(Z_{n}))^{2}(n^{-\frac{2}{\beta }%
})^{2}(r_{n})^{2}}\Phi (e^{\alpha n})
\end{eqnarray*}%
and, since $\mathbf{E}(Z_{n}) = m_{n} \cdot \mu (B_{r_{n}}(x_{0}))= e^{\zeta
n} \left\lfloor \mu (B_{r_{n}}(x_{0}))^{-1}\right\rfloor
\mu(B_{r_{n}}(x_{0}))$, we have 
\begin{equation*}
\mathbf{E}((Y_{n})^{2}) \leq \frac{(e^{\alpha n+1}+1)}{m_{n} \, \mu
(B_{r_{n}}(x_{0}))}+ \frac{4 \Phi(e^{\alpha n})}{\mu
(B_{r_{n}}(x_{0}))^{2}(n^{-\frac{2}{\beta }})^{2}(r_{n})^{2}}%
+C^{2}n^{-4}+2Cn^{-2}
\end{equation*}
Finally, we note that $\zeta>\alpha $ and $p \, \alpha >2\overline{d}_{\mu
}(x)+2$, thus $\mathbf{E}((Y_{n})^{2})$ goes to zero in a summable\footnote{%
We remark that $\frac{\Phi (e^{\alpha n})}{\mu (B(x,r_{n}))^{2}(n^{-\frac{2}{%
\beta }})^{2}(r_{n})^{2}} \leq C^{\prime}\, n^{4/\beta } \frac{e^{-p \alpha
n}}{e^{-n (2\overline{d}+2)}} = n^{4/\beta } e^{-n[p\alpha - (2\overline{d}%
+2)]}$.} way. This proves that $\frac{Z_{n}}{\mathbf{E}(Z_{n})}\rightarrow 1$
almost everywhere.
\end{proof}

\begin{corollary}
\label{corD} Under the assumptions of the previous theorem, if $T$  power law decay of correlations then
\begin{equation*}
\overline{R}(x,x_{0})<\infty ,\underline{R}(x,x_{0})<\infty
\end{equation*}%
$\mu $ for a.e. $x\in X$.
\end{corollary}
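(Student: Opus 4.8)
The plan is to obtain Corollary \ref{corD} as an immediate consequence of Theorem \ref{1}, after rewriting the measure-normalized ratio appearing there in terms of $-\log r$. We work with power-law decay exponent $p>0$; if $p=0$ the bound of Theorem \ref{1} is vacuous, consistently with the paper's convention that subpolynomial decay has zero exponent, so there is nothing to prove in that case.

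First I would record that, under the standing hypotheses, $0<\underline{d}_{\mu}(x_{0})\le\overline{d}_{\mu}(x_{0})<\infty$, hence the right-hand side of Theorem \ref{1},
\[
K:=1+\frac{2\overline{d}_{\mu}(x_{0})+2}{\underline{d}_{\mu}(x_{0})\,p},
\]
is a finite constant, and for $\mu$-almost every $x$ we have $\limsup_{r\to0}\frac{\log\tau_{r}(x,x_{0})}{-\log\mu(B_{r}(x_{0}))}\le K$. In particular $\tau_{r}(x,x_{0})$ is finite for all small $r$ for such $x$, since an undefined hitting time would make this ratio infinite. Note also that $\mu(B_{r}(x_{0}))>0$ for every $r>0$: otherwise, by monotonicity of $r\mapsto\mu(B_{r}(x_{0}))$, the lower local dimension would be infinite, contradicting $\overline{d}_{\mu}(x_{0})<\infty$.

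Next I would convert the denominator. By the definition of $\overline{d}_{\mu}(x_{0})$ as a $\limsup$, for each $\epsilon>0$ and all $r$ small enough one has $\frac{\log\mu(B_{r}(x_{0}))}{\log r}\le\overline{d}_{\mu}(x_{0})+\epsilon$; multiplying this by the negative quantity $\log r$ gives $-\log\mu(B_{r}(x_{0}))\le(\overline{d}_{\mu}(x_{0})+\epsilon)(-\log r)$. Since $\tau_{r}\ge1$ and $\mu(B_{r}(x_{0}))\to0$, both factors in
\[
\frac{\log\tau_{r}(x,x_{0})}{-\log r}=\frac{\log\tau_{r}(x,x_{0})}{-\log\mu(B_{r}(x_{0}))}\cdot\frac{-\log\mu(B_{r}(x_{0}))}{-\log r}
\]
are non-negative for small $r$, so $\frac{\log\tau_{r}(x,x_{0})}{-\log r}\le(\overline{d}_{\mu}(x_{0})+\epsilon)\,\frac{\log\tau_{r}(x,x_{0})}{-\log\mu(B_{r}(x_{0}))}$. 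Taking $\limsup$ as $r\to0$ and then letting $\epsilon\to0$ yields $\overline{R}(x,x_{0})\le\overline{d}_{\mu}(x_{0})\,K<\infty$ for $\mu$-almost every $x$, and then $\underline{R}(x,x_{0})\le\overline{R}(x,x_{0})<\infty$ trivially.

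I do not expect any genuine obstacle: the corollary is essentially a restatement of Theorem \ref{1} with the measure replaced by the radius via the (finite, positive) local dimensions. The only points to watch are the sign when multiplying the local-dimension inequality by $\log r<0$, and the observation that the full-measure set on which the conclusion holds is precisely the one provided by Theorem \ref{1}. (In the exact-dimensional case one could quote Remark \ref{rem:tausumu} to streamline the last step, but exact dimensionality is not assumed here and the cruder bound $\overline{d}_{\mu}(x_{0})\,K$ already gives the desired finiteness.)
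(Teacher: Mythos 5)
Your proof is correct and follows exactly the route the paper intends (the corollary is stated without proof, but the conversion from the measure-normalized $\limsup$ of Theorem \ref{1} to the radius-normalized indicator via the finite upper local dimension is precisely the mechanism sketched in Remark \ref{rem:tausumu}). Your handling of the sign of $\log r$, the positivity of $\mu(B_r(x_0))$, and the implicit reading that the exponent $p$ must be strictly positive are all sound, so nothing further is needed.
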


This gives us that no reparametrizations of translations with infinite
hitting time indicator (as in Fayad's example, as showed in section 4
and 5) can be polynomially
mixing, moreover.

\begin{corollary}
\label{cormanifold} If $(X,T)$ is a map on an $d$ dimensional manifold,
having an absolutely continuous invariant measure
with strictly positive density and $\overline{R}(x,y)=R$
a.e., then the speed of decay of correlations of the system is a power law
with exponent $p\leq{\frac{2d+2}{R-d}}$.
\end{corollary}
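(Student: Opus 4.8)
The plan is to read this off from Theorem~C (the specialization of Theorem~\ref{1} to manifolds) by a one-line rearrangement of its upper bound. First I would check that the standing hypotheses put us in the scope of Theorem~C at every target $x_{0}$: as recorded in the proof of Theorem~C, an absolutely continuous invariant measure with strictly positive density on a $d$-dimensional manifold is exact dimensional, with $\underline{d}_{\mu}(x_{0})=\overline{d}_{\mu}(x_{0})=d$ for every $x_{0}$, and satisfies the annulus estimate (\ref{doub}) that is needed in order to use the decay-of-correlations hypothesis for balls in Theorem~\ref{1}. Exact dimensionality also identifies the two normalizations of the hitting time indicator, since $-\log\mu(B_{r}(x_{0}))\sim d(-\log r)$, so the conclusion of Theorem~\ref{1} becomes exactly $\overline{R}(x,x_{0})\le d+\frac{2d+2}{p}$ (cf.\ Remark~\ref{rem:tausumu}); the matching lower bound $\overline{R}(x,x_{0})\ge d$ holds in general by Proposition~\ref{GAN}.

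With this in hand the deduction is immediate. Since $\overline{R}(x,y)=R$ a.e.\ and $\overline{R}(x,x_{0})\ge d$ always, we have $R\ge d$; if $R=d$ the asserted bound reads $p\le\frac{2d+2}{R-d}=\infty$ and there is nothing to prove, so assume $R>d$. If the correlations were to decay faster than any power law, then Theorem~C would apply with arbitrarily large finite exponent, giving $\overline{R}(x,x_{0})\le d+\frac{2d+2}{p}$ for such $p$ and hence $\overline{R}(x,x_{0})\le d$ a.e., contradicting $R>d$; so the decay is at worst a power law, with some exponent $p$. Then Theorem~C yields $R=\overline{R}(x,x_{0})\le d+\frac{2d+2}{p}$ for $\mu$-almost every $x$, which rearranges to $p(R-d)\le 2d+2$, that is $p\le\frac{2d+2}{R-d}$. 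In the convention of Definition~\ref{sup}, in which subpolynomial decay is counted as exponent $0$, this is precisely the assertion of the corollary.

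I do not expect any genuine obstacle, because the whole analytic content --- the second moment / Borel-Cantelli estimate controlling the hitting time by the ball measure together with the correlation speed --- already sits inside the proof of Theorem~\ref{1}. The only things requiring a little care are bookkeeping matters: confirming that an absolutely continuous measure with strictly positive density on a $d$-manifold does deliver both exact dimensionality and the annulus condition (\ref{doub}) (as used in the proof of Theorem~C, where continuity of the density enters), passing from the $-\log\mu(B_{r}(x_{0}))$ normalization of Theorem~\ref{1} to the $-\log r$ normalization of the statement, and isolating the degenerate case $R=d$ and the super-polynomial case, for both of which the claimed bound holds trivially.
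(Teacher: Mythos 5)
Your proposal is correct and is exactly the argument the paper intends: the corollary is stated without a printed proof precisely because it is the one-line rearrangement of the upper bound in Theorem~C (i.e.\ of Theorem~\ref{1} combined with exact dimensionality and condition~(\ref{doub}) for a positive continuous density), namely $R\le d+\frac{2d+2}{p}$ giving $p\le\frac{2d+2}{R-d}$. Your extra care with the degenerate case $R=d$, the super-polynomial case, and the change of normalization from $-\log\mu(B_r(x_0))$ to $-\log r$ is sound and consistent with the conventions of Definition~\ref{sup}.
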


\begin{proof}
(of Theorem D) Follows from Proposition \ref{reparametriz}, Proposition \ref{UNO} and
Corollary \ref{cormanifold}.
\end{proof}

\section{Appendix}

In this section we recall and precise some results on the equivalence
between different approaches to the hitting time problem.

Let us denote by $\mathrm{dist}(\cdot ,\cdot )$ the distance on $X$ and
define $d_{n}(x,y)=\min_{1 \leq i\leq n}\mathrm{dist}(T^{i}(x),y)$.

\begin{proposition}
\label{prop:htll} 
Given a system $T$ on a metric space $(X,\mathrm{dist})$: 
\begin{equation*}
\underline{R}(x,x_{0})=\left( \limsup_{n\rightarrow \infty}\frac{-\log
d_{n}(x,x_{0})}{\log n}\right) ^{-1}
\end{equation*}%
and 
\begin{equation*}
\overline{R}(x,x_{0})=\left( \liminf_{n\rightarrow \infty}\frac{-\log
d_{n}(x,x_{0})}{\log n}\right) ^{-1}.
\end{equation*}
\end{proposition}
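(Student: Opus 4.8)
The plan is to exploit that $n\mapsto d_n(x,x_0)$ and $r\mapsto\tau_r(x,x_0)$ are mutual generalized inverses and to transport $\liminf$/$\limsup$ across this duality. Three elementary facts suffice. First, writing $n=\tau_r(x,x_0)$, the definition of first entrance gives $d_n(x,x_0)<r\le d_{n-1}(x,x_0)$, since the orbit of $x$ lies in $B_r(x_0)$ at time $n$ but not before. Second, call $m$ a \emph{record time} if $d_m<d_{m-1}$; then $\tau_r(x,x_0)=m$ for every $r\in(d_m,d_{m-1}]$, an interval which is nonempty exactly for record times. Third, since $d_m(x,x_0)$ is the least value of $\mathrm{dist}(T^i x,x_0)$ over $1\le i\le m$, nothing in that range is strictly smaller, so $\tau_{d_m}(x,x_0)>m$. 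Throughout I may assume $\tau_r(x,x_0)\in(0,\infty)$ for all $r>0$ — equivalently $d_n\searrow0$ with $d_n>0$ and $\tau_r\to\infty$ as $r\to0$; in the degenerate cases (the orbit avoids some ball $B_{r_0}(x_0)$, or hits $x_0$ exactly) both sides of each asserted identity are simultaneously $\infty$ or simultaneously $0$, which one checks by hand.

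Put $a_n:=\log n/(-\log d_n(x,x_0))$, positive for all large $n$, so that $\limsup_n a_n=(\liminf_n \frac{-\log d_n}{\log n})^{-1}$ and $\liminf_n a_n=(\limsup_n \frac{-\log d_n}{\log n})^{-1}$; the proposition then reduces to $\overline R(x,x_0)=\limsup_n a_n$ and $\underline R(x,x_0)=\liminf_n a_n$. For $\overline R\le\limsup_n a_n$: with $n=\tau_r$, fact one gives $\frac{\log\tau_r}{-\log r}\le\frac{\log n}{-\log d_{n-1}}=\frac{\log n}{\log(n-1)}\,a_{n-1}$, and since $n\to\infty$ and $\log n/\log(n-1)\to1$ as $r\to0$, taking $\limsup$ yields the bound. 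For $\overline R\ge\limsup_n a_n$: choose $m_k\to\infty$ with $a_{m_k}\to\limsup_n a_n$ and put $r_k=d_{m_k}\to0$; then $\tau_{r_k}=\tau_{d_{m_k}}>m_k$ by fact three, so $\frac{\log\tau_{r_k}}{-\log r_k}\ge\frac{\log m_k}{-\log d_{m_k}}=a_{m_k}$. The inequality $\underline R\ge\liminf_n a_n$ is the symmetric easy one: with $n=\tau_r$, now $d_n<r$ gives $\frac{\log\tau_r}{-\log r}\ge a_{\tau_r}$, and $\tau_r$ runs through a cofinal set of integers as $r\to0$.

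The delicate step is $\underline R\le\liminf_n a_n$, and this is where I expect the main (modest) obstacle: because $r\mapsto\tau_r$ is a step function whose attained values are precisely the record times, one cannot directly identify the continuous-$r$ infimum with the discrete-$n$ infimum. The remedy is the observation that if $m$ is not a record and $m'$ is the largest record $\le m$, then $d_m=d_{m'}$ while $m\ge m'$, so $a_m\ge a_{m'}$; hence $\liminf_n a_n$ is already attained along record times. Choosing records $m_k\to\infty$ with $a_{m_k}\to\liminf_n a_n$, the interval $(d_{m_k},d_{m_k-1}]$ is nonempty and $\tau_r\equiv m_k$ on it, so taking $r_k$ in it close enough to $d_{m_k}$ makes $\frac{\log\tau_{r_k}}{-\log r_k}=\frac{\log m_k}{-\log r_k}\le a_{m_k}+\tfrac1k$, with $r_k\le d_{m_k-1}\to0$. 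Letting $k\to\infty$ gives $\underline R\le\liminf_n a_n$, and combining the four inequalities yields both displayed identities.
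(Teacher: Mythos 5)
Your proof is correct, and it rests on the same core fact as the paper's: the duality $d_n\le r\Leftrightarrow \tau_r\le n$ between the two monotone functions. The execution differs in a way worth noting. The paper sets $a=\limsup_n\frac{-\log d_n}{\log n}$ and runs the $\epsilon$-characterization: infinitely many $n$ with $d_n\le n^{-a+\epsilon}$ produce radii $r=n^{-a+\epsilon}\to0$ along which $\tau_r\le r^{-1/(a-\epsilon)}$, while the eventual bound $d_n\ge n^{-a-\epsilon}$ produces a subexponentially decreasing sequence of radii along which $\tau_r\ge r^{-1/(a+\epsilon)}$; the passage from that sequence to all small $r$ (the same "delicate step" you isolate) is handled only by the parenthetical remark that the radii decrease slower than exponentially, i.e.\ by an implicit appeal to Lemma \ref{lemmino}. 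You instead work with the exact sandwich $d_{\tau_r}<r\le d_{\tau_r-1}$ and the record-time structure of the step function $r\mapsto\tau_r$, which makes both hard directions (in particular $\underline R\le\liminf_n a_n$, where one must realize the discrete infimum by admissible radii) completely explicit, and you dispose of the degenerate cases ($\tau_r=\infty$ for some $r$, or $d_n=0$ eventually) cleanly. The trade-off: the paper's argument is shorter and matches the $\sup/\inf$ formulations used elsewhere (Proposition \ref{prop:didienne}), while yours is self-contained on the interpolation issue and avoids introducing $\epsilon$'s. Either is acceptable; no gap in yours.
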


\begin{proof}
Note that $d_n \leq r$ if and only if $\tau_r \leq n$. Suppose $%
\limsup_{n\to \infty} \frac{-\log d_n}{\log n}=a$ and take $\epsilon >0$.

There exist infinitely many $n$ such that $\frac{-\log d_n}{\log n} \geq
a-\epsilon$ that is $d_n \leq n^{-a+\epsilon}$, thus $\tau_{n^{-a+\epsilon}}%
\leq n$. Put $r=n^{-a+\epsilon}, \; n= r^{\frac{-1}{a -\epsilon}}$ and you
find a sequence of radii going to zero such that $\tau_r \leq r^{\frac{-1}{a
-\epsilon}}$, therefore $\liminf_{r\to 0} \frac{\log \tau_r}{-\log r} \leq 
\frac{1}{a}$.

Eventually $\frac{-\log d_{n}}{\log n}\leq a+\epsilon $ or $d_{n}\geq
n^{-a-\epsilon }$, thus $\tau _{n^{-a-\epsilon }}\geq n$. Putting $%
r=n^{-a-\epsilon },\;n=r^\frac{-1}{a+\epsilon }$ we obtain a sequence of radii
(decreasing slower than exponentially) for which $\tau _{r}\geq r^{\frac{-1}{%
a+\epsilon }}$, therefore $\liminf_{r\rightarrow 0}\frac{\log \tau _{r}}{%
-\log r}\geq \frac{1}{a}$.

This establishes a bijection ($a\mapsto \frac{1}{a}$) between the
non-negative quantities $\underline{R}(x,x_{0})$ and $\limsup_{n\rightarrow
\infty }\frac{-\log d_{n}}{\log n}$. The other equation is treated similarly.
\end{proof}

\begin{proposition}
\label{prop:didienne} For any function $f:\mathbb{N}\rightarrow \mathbb{R}$ 
\begin{equation*}
\sup \{\beta :\liminf_{n}n^{\beta }f(n)=0\}=\inf \{\beta
:\liminf_{n}n^{\beta }f(n)=\infty \}=\limsup_{n}\frac{-\log f(n)}{\log n}
\end{equation*}%
and 
\begin{equation*}
\sup \{\beta :\limsup_{n}n^{\beta }f(n)=0\}=\inf \{\beta
:\limsup_{n}n^{\beta }f(n)=\infty \}=\liminf_{n}\frac{-\log f(n)}{\log n}.
\end{equation*}
\end{proposition}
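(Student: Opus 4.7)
The plan is to translate the product $n^\beta f(n)$ into its logarithm and compare to the quantity $\frac{-\log f(n)}{\log n}$ that appears on the right-hand side. Writing
\[
\log\bigl(n^\beta f(n)\bigr)=(\log n)\left(\beta-\frac{-\log f(n)}{\log n}\right),
\]
and recalling that $\log n\to+\infty$, we see that the divergence or vanishing of $n^\beta f(n)$ along a sequence is controlled by the sign and bounded-away-from-zero-ness of the factor in parentheses. This reduces everything to comparisons between $\beta$ and the limsup/liminf of $\frac{-\log f(n)}{\log n}$.

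Concretely, for the first equation set $L:=\limsup_n \frac{-\log f(n)}{\log n}$. First I would show that if $\beta<L$ then $\liminf_n n^\beta f(n)=0$: by definition of $L$ there is a subsequence $n_k$ with $\frac{-\log f(n_k)}{\log n_k}\geq \beta+\delta$ for some $\delta>0$, hence $n_k^\beta f(n_k)\leq n_k^{-\delta}\to 0$. Next, if $\beta>L$ then eventually $\frac{-\log f(n)}{\log n}\leq \beta-\delta$, so $n^\beta f(n)\geq n^\delta\to\infty$, giving even $\lim_n n^\beta f(n)=\infty$ (and in particular $\liminf=\infty$). These two facts together pin down both the sup and the inf in the statement equal to $L$: the set $\{\beta:\liminf_n n^\beta f(n)=0\}$ contains $(-\infty,L)$ and is disjoint from $(L,\infty)$, and similarly the set $\{\beta:\liminf_n n^\beta f(n)=\infty\}$ contains $(L,\infty)$ and is disjoint from $(-\infty,L)$.

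For the second equation, with $\ell:=\liminf_n \frac{-\log f(n)}{\log n}$, the argument is symmetric but uses that $\limsup_n n^\beta f(n)=0$ forces the full sequence $n^\beta f(n)$ to tend to $0$ (its limsup being $0$), and conversely. If $\beta<\ell$, then eventually $\frac{-\log f(n)}{\log n}\geq \beta+\delta$, hence $n^\beta f(n)\leq n^{-\delta}\to 0$, so $\limsup_n n^\beta f(n)=0$. If $\beta>\ell$, then infinitely often $\frac{-\log f(n)}{\log n}\leq \beta-\delta$, so $n^\beta f(n)\geq n^\delta$ along a subsequence, giving $\limsup_n n^\beta f(n)=\infty$. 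The two bounds then identify both the sup and the inf with $\ell$.

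There is no real obstacle here; the entire content is the observation that passing to logarithms converts the two-parameter question ``for which $\beta$ does $n^\beta f(n)$ go to $0$ (resp.\ $\infty$)?'' into the one-parameter question ``where does $\frac{-\log f(n)}{\log n}$ accumulate?''. The only care needed is at the boundary $\beta=L$ (resp.\ $\beta=\ell$), where the liminf/limsup of $n^\beta f(n)$ can take any value in $[0,\infty]$; but this boundary value is irrelevant to the sup and inf, so it does not affect the stated equalities.
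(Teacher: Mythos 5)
Your proof is correct and follows essentially the same route as the paper's: both rest on the elementary identity $\log\bigl(n^{\beta}f(n)\bigr)=(\log n)\bigl(\beta-\tfrac{-\log f(n)}{\log n}\bigr)$ and the resulting comparison of $\beta$ with $L=\limsup_n\tfrac{-\log f(n)}{\log n}$ (resp. $\ell$). The only structural difference is the direction of the implications: the paper shows that membership of $\beta$ in each set forces $\beta\leq L$ (resp. $\beta\geq L$) and then appeals to the easy equality of the sup and the inf, whereas you show that $\beta<L$ (resp. $\beta>L$) forces membership, which pins both the sup and the inf at $L$ directly without that extra observation.
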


\begin{proof}
We will prove the first one, the other being similar. The equality between $%
\sup $ and $\inf $ is obvious. We will show: 
\begin{equation*}
\liminf_{n\rightarrow \infty }n^{\beta }f(n)=0\Rightarrow
\limsup_{n\rightarrow \infty }\frac{-\log f(n)}{\log n}\geq \beta ,
\end{equation*}%
\begin{equation*}
\liminf_{n\rightarrow \infty }n^{\beta }f(n)=\infty \Rightarrow
\limsup_{n\rightarrow \infty }\frac{-\log f(n)}{\log n}\leq \beta .
\end{equation*}

Take $\beta$ such that $\liminf_n n^\beta f(n) =0$ holds and $\epsilon>0$.
There exist infinitely many $n$ such that 
\begin{equation*}
n^\beta f(n) < \epsilon \Leftrightarrow \beta \log n + \log f(n) < \log
\epsilon \Leftrightarrow \frac{-\log f(n)}{\log n} > \beta - \frac{\log
\epsilon}{\log n}
\end{equation*}
thus $\limsup_n \frac{-\log f(n)}{\log n} \geq \beta$.

Take $\beta$ such that $\liminf_n n^\beta f(n) =\infty$ holds. For every $%
\epsilon>0$ definitively 
\begin{equation*}
n^\beta f(n) > \frac{1}{\epsilon} \Leftrightarrow \beta \log n + \log f(n) >
-\log \epsilon \Leftrightarrow \frac{-\log f(n)}{\log n} < \beta + \frac{%
\log \epsilon}{\log n}
\end{equation*}
thus $\limsup_n \frac{-\log f(n)}{\log n} \leq \beta$.
\end{proof}

\begin{proposition}
\label{prop:bosheq} 
\begin{equation}
\liminf_{n}n^{\beta }d_{n} (x,y) =0 \Leftrightarrow
\liminf_{n} n^{\beta }\mathrm{dist}(T^{n}(x),y)=0  \label{eq:bosheq}
\end{equation}%
thus 
\begin{equation*}
\limsup_{n}\frac{-\log d_{n}(x,y)}{\log n}= \limsup_{n}\frac{-\log \mathrm{%
dist}(T^{n}(x),y)}{\log n}.
\end{equation*}
\end{proposition}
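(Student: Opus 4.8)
The plan is to deduce the displayed equality of $\limsup$'s from the equivalence $(\ref{eq:bosheq})$ and then to prove $(\ref{eq:bosheq})$ directly. For the reduction I would apply Proposition \ref{prop:didienne} twice: once to $f(n)=d_{n}(x,y)$ and once to $f(n)=\mathrm{dist}(T^{n}(x),y)$. Each application rewrites the corresponding $\limsup_{n}\frac{-\log f(n)}{\log n}$ as $\sup\{\beta:\liminf_{n}n^{\beta}f(n)=0\}$, and since $(\ref{eq:bosheq})$ says exactly that these two sets of admissible exponents coincide, their suprema --- hence the two $\limsup$'s --- coincide. Thus the whole statement reduces to $(\ref{eq:bosheq})$.

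The implication ``$\Leftarrow$'' of $(\ref{eq:bosheq})$ is the trivial half: the minimum defining $d_{n}(x,y)$ contains the term $\mathrm{dist}(T^{n}(x),y)$, so $d_{n}(x,y)\le\mathrm{dist}(T^{n}(x),y)$; multiplying by $n^{\beta}$, passing to the $\liminf$, and using $n^{\beta}d_{n}(x,y)\ge 0$ gives $\liminf_{n}n^{\beta}d_{n}(x,y)=0$ at once.

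For ``$\Rightarrow$'' I would argue as follows. Assume $\liminf_{n}n^{\beta}d_{n}(x,y)=0$ and fix a subsequence $n_{k}\uparrow\infty$ with $n_{k}^{\beta}d_{n_{k}}(x,y)\to 0$. For each $k$ choose $j_{k}\in\{1,\dots,n_{k}\}$ realizing the minimum, so $\mathrm{dist}(T^{j_{k}}(x),y)=d_{n_{k}}(x,y)$; since $j_{k}\le n_{k}$,
\[
j_{k}^{\beta}\,\mathrm{dist}(T^{j_{k}}(x),y)\ \le\ n_{k}^{\beta}\,d_{n_{k}}(x,y)\ \longrightarrow\ 0 .
\]
If the $j_{k}$ are unbounded this exhibits a subsequence of indices along which $m^{\beta}\,\mathrm{dist}(T^{m}(x),y)\to 0$, i.e.\ $\liminf_{n}n^{\beta}\,\mathrm{dist}(T^{n}(x),y)=0$, as wanted. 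The only delicate point --- the one I expect to be the real obstacle --- is ruling out that the $j_{k}$ take only finitely many values: if they did, some index $j^{\ast}$ would occur for infinitely many $k$, forcing $\mathrm{dist}(T^{j^{\ast}}(x),y)=d_{n_{k}}(x,y)$ along those $k$ and hence, letting $k\to\infty$ in $n_{k}^{\beta}\,\mathrm{dist}(T^{j^{\ast}}(x),y)\to 0$, forcing $\mathrm{dist}(T^{j^{\ast}}(x),y)=0$, that is, the orbit of $x$ actually meets $y$. This degenerate situation concerns only $x\in\bigcup_{n\ge1}T^{-n}\{y\}$, a set of measure zero that is irrelevant for the almost-everywhere statements in which this proposition is used (and there $d_{n}(x,y)=0$ eventually, so it is disposed of directly); in every case of interest the $j_{k}$ are therefore unbounded and $(\ref{eq:bosheq})$ holds. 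Feeding this back through Proposition \ref{prop:didienne} as above yields the displayed equality.
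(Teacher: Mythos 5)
Your argument is correct and is essentially the paper's own proof: the trivial inequality $d_n(x,y)\le\mathrm{dist}(T^n(x),y)$ for one direction, the choice of an index realizing the minimum along a subsequence with $n_k^{\beta}d_{n_k}(x,y)\to 0$ for the other, and Proposition \ref{prop:didienne} to pass to the equality of $\limsup$'s. The one place where you are more careful than the paper is worth keeping: the paper simply asserts that its indices $n_k'$ tend to infinity, which fails precisely when the orbit of $x$ passes through $y$ (and there $(\ref{eq:bosheq})$ genuinely fails, e.g.\ if $T(x)=y$ but the orbit never returns near $y$), so your explicit exclusion of the measure-zero set $\bigcup_{n\ge 1}T^{-n}\{y\}$ is the honest way to close that step.
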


\begin{proof}
To prove double implication we first observe that
$0 \leq n^{\beta }d_{n} (x,y) \leq n^{\beta }\mathrm{dist}(T^{n}(x),y)$, which gives us
one implication.
Now take a subsequence $\lbrace n_k \rbrace_{k \in \mathbb{N}}$ such that $\lim_k {n_k}^\beta d_{n_k} (x,y) = 0$.
For each $k$ we construct another subsequence $\lbrace n_k' \rbrace_{k \in \mathbb{N}}$
taking $n_k'$ the biggest integer such that $d_{n_k} (x,y) = \mathrm{dist}(T^{n_k'}(x),y)$
and $n_k' \leq n_k$. We have that $n_k' \to \infty$ and
${n_k'}^\beta \mathrm{dist}(T^{n_k'}(x),y) \leq {n_k}^\beta d_{n_k} (x,y) \to 0$.

Second part of the statement follows for (\ref{eq:bosheq}) and proposition \ref{prop:didienne}.
\end{proof}

\begin{remark} \label{rem:approcci}
Concerning relations between the two approaches of waiting time and Borel-Cantelli
we recall some results proved in \cite{GK}.
A result like equation (\ref{BC}) implies that, for every $x_0$ and almost every $x$,
\begin{equation}
\underset{r\rightarrow 0}{\lim }\frac{\log \tau _{r}(x,x_{0})}{-\log \mu
(B_{r}(x_{0}))}=1,
\end{equation}%
hence 
\begin{equation}
\lim_{r\rightarrow 0}\frac{\log \tau _{r}(x,x_{0})}{-\log r}=d_{\mu }(x_{0})
\end{equation}%
when $\mu (x_{0})=0$ and the local dimension exists.

Conversely there are systems for which $\liminf_{r\rightarrow 0}\frac{\log
\tau _{r}(x,x_{0})}{-\log r}=d_{\mu }(x_{0})$ but there are certain
decreasing sequences of balls with $s_{n}=\sum_{i=1}^{n}\mu
(B_{i})\rightarrow \infty $ such that $\mu (\limsup_{i}T^{-i}(B_{i}))=0$.
This situation, however, cannot happen if we restrict to balls where
$s_{n}\sim n^{\alpha }$ , $\alpha >0$.
\end{remark}

\end{document}